\theoremstyle{plain}
\newtheorem{theorem}{Theorem}[section]
\newtheorem{lemma}[theorem]{Lemma}
\numberwithin{equation}{section}
\theoremstyle{definition}
\newtheorem{definition}[theorem]{Definition}
\newtheorem{remark}[theorem]{Remark}
\newtheorem{example}[theorem]{Example}
\newtheorem{conjecture}[theorem]{Conjecture}
\newcommand{\cP}{\mathcal{P}}
\newcommand{\cC}{\mathcal{C}}
\newcommand{\cI}{\mathcal{I}}
\newcommand{\cR}{\mathcal{R}}
\newcommand{\cX}{\mathcal{X}}
\newcommand{\fS}{\mathfrak{S}}
\newcommand{\Q}{\mathbb{Q}}
\newcommand{\R}{\mathbb{R}}
\newcommand{\Z}{\mathbb{Z}}
\newcommand{\N}{\mathbb{N}}
\newcommand{\ie}{\textit{i}.\textit{e}. }
\newcommand{\set}{\Sigma}
\DeclareMathOperator{\Inv}{Inv}
\DeclareMathOperator{\even}{even}
\DeclareMathOperator{\odd}{odd}
\DeclareMathOperator{\Perm}{Perm}
\DeclareMathOperator{\Alt}{Alt}
\DeclareMathOperator{\sgn}{sgn}
\begin{document}
\title[The cohomology rings of real permutohedral varieties]{The cohomology rings of real permutohedral varieties}

\author[S. Choi]{Suyoung Choi}
\address{Department of mathematics, Ajou University, 206, World cup-ro, Yeongtong-gu, Suwon 16499,  Republic of Korea}
\email{schoi@ajou.ac.kr}

\author[Y. Yoon]{Younghan Yoon}
\address{Department of mathematics, Ajou University, 206, World cup-ro, Yeongtong-gu, Suwon 16499,  Republic of Korea}
\email{younghan300@ajou.ac.kr}

\date{\today}
\subjclass[2020]{57S12, 14M25, 57N65, 55U10}

\thanks{The authors were supported by the National Research Foundation of Korea Grant funded by the Korean Government (NRF-2019R1A2C2010989).}

\keywords{cohomology rings, toric topology, permutohedral varieties, alternating permutations, root systems, Weyl groups, Incidence algebras}

\begin{abstract}
    A permutohedral variety is a remarkable object in various areas of mathematics, and its topological invariants are widely recognized.
    However, only little is known about a real permutohedral variety, that is, the real locus of a permutohedral variety.
    
    The rational Betti numbers of real permutohedral varieties were computed in terms of alternating permutations in 2012.
    In this paper, we provide explicit descriptions of the cohomology ring of real permutohedral varieties.
    In particular, we describe the multiplicative structure in terms of alternating permutations.
    \end{abstract}
\maketitle
\tableofcontents

\section{Introduction}
Let $\Phi_R$ denote an irreducible root system of type $R$ with rank $n$, and let $W_R$ be the associated Weyl group.
A valuable approach to comprehending the root system $\Phi_R$ is to view it as a reflection group within the simplicial complex.
This complex, known as the \emph{Coxeter complex} $K_R$, consists of Weyl chambers of $W_R$ and its dimension is $n-1$.
The set of $0$-simplices of $K_R$ is precisely the $W_R$-orbit of the fundamental co-weights.
It is well-known that the combination of $K_R$ and the co-weights gives rise to a non-singular complete fan.
Through the fundamental theorem for toric geometry, we obtain the corresponding smooth compact toric variety $X_R$, which is commonly known as the \emph{Coxeter toric variety} of type $R$.
For further details, refer to \cite{Procesi1990}.
In particular, Coxeter toric variety of type $A$ is also known as the \emph{permutohedral variety}, one of the Hessenberg varieties \cite{DeMari-Procesi-Shayman1992}, and is isomorphic to a specific De Concini-Procesi wonderful model as well \cite{DeConcini-Procesi1995}.
The permutohedral variety has been studied in various areas of mathematics \cite{Klyachko1985}, \cite{Mari1988}, \cite{Procesi1990}, \cite{JuneHuh2012}, and \cite{Huh2014PhD}.

For each toric variety $X$, there is a canonical involution induced from a complex conjugation.
The real locus $X^\R$ of $X$, that is, the fixed point set of $X$, is a real algebraic variety, and it is called a \emph{real toric variety}.
In particular, the real locus $X^{\R}_R$ of a Coxeter toric variety $X_R$ is known as the \emph{real Coxeter toric variety} of type $R$, with a dimension of $n$.
Generally, computing the topological invariants of real algebraic varieties is a highly intricate task.
However, the real Coxeter variety exhibits a well-structured nature, prompting numerous attempts to compute its topological invariants.
Remarkably, the computation of the rational Betti numbers of $X^{\R}_R$ have been completely done for each Weyl group \cite{Henderson2012},\cite{Choi-Park-Park2017typeB},\cite{Choi-Kaji-Park2019},\cite{Cho-Choi-Kaji2019}, and \cite{Choi-Yoon-Yu2024}.

Nevertheless, the multiplicative structure of $H^\ast(X^\R_R; \Q)$ remains largely unexplored.
The primary aim of this paper is to provide an explicit description of the cohomology ring for \emph{real permutohedral varieties}, that is, real Coxeter toric varieties $X^\R_A$ of type~$A$.
As previously mentioned, the additive structure was already computed by Henderson \cite{Henderson2012} and later recomputed by Suciu \cite{Suciu2012} using entirely different approaches.
It is worth noting that the $k$th $\Q$-Betti number~$\beta^k$ of the real permutohedral varieties correspond to certain combinatorial integer sequences
\begin{equation}\label{typeA_betti}
\beta^k(X^{\R}_{A_n}) = \binom{n+1}{2k} a_{2k},
\end{equation}
where $a_{2k}$ represents the $2k$th Euler zigzag number (A000111 in \cite{oeis}), which corresponds to the number of alternating permutations of length~$2k$.
Therefore, each generator of $\beta^k(X^{\R}_{A_n})$ can be associated with an alternating permutation of length~$2k$ on $I \in \binom{[n+1]}{2k}$.

Denote by $\Perm_I$ the set of permutations on $I$, and by $\Alt_I$ the set of alternating permutations on $I$.
In Section~\ref{section : type $A$}, we firstly introduce explicit $\Q$-vector space isomorphisms
$$
    \bigoplus_{I \in \binom{[n+1]}{2k}}\Q\left\langle \Alt_I \right\rangle  \longrightarrow \bigoplus_{I \in \binom{[n+1]}{2k}} \frac{\Q\left\langle \Perm_I  \right\rangle}{M_I} \longrightarrow H_{k}(X^{\R}_{A_n})  \longrightarrow H^{k}(X^{\R}_{A_n}),
$$
where $M_I$ is a submodule of $\Q\left\langle \Perm_I \right\rangle$ generated by
\begin{enumerate}
    \item $(x_1\cdots /x_{2i-1}x_{2i}/\cdots x_{2k}) + (x_1\cdots /x_{2i}x_{2i-1}/\cdots x_{2k})$ for $1 \leq i \leq k-1$,
    \item $(x_1\cdots/x_{2i-1}x_{2i}/x_{2i+1}x_{2i+2}/\cdots x_{2k}) -(x_1\cdots/x_{2i-1}x_{2i+1}/x_{2i}x_{2i+2}/\cdots x_{2k})$\\ $+(x_1\cdots/x_{2i-1}x_{2i+2}/x_{2i}x_{2i+1}/\cdots x_{2k}) +(x_1\cdots/x_{2i}x_{2i+1}/x_{2i-1}x_{2i+2}/\cdots x_{2k})$\\ $-(x_1\cdots/x_{2i}x_{2i+2}/x_{2i-1}x_{2i+1}/\cdots x_{2k}) +(x_1\cdots/x_{2i+1}x_{2i+2}/x_{2i-1}x_{2i}/\cdots x_{2k})$\\ for $1 \leq i \leq k-1$.
\end{enumerate}
The above relations are well known as the \emph{Garnir relation} for Young tableaux \cite{Sagan2001book}.
For convenience, every permutation (respectively, alternating permutation) $\alpha \in \Perm_I$ is regarded as a vector of $\Q\left\langle \Perm_I \right\rangle$ (respectively, $\Q\left\langle \Alt_I \right\rangle$).
By the isomorphisms above, $x \in \Perm_I$ can be uniquely written by a linear combination of $\Alt_I$ over ${\Q\left\langle \Perm_I  \right\rangle}/{M_I}$ as
$$
x = \sum_{\alpha \in \Alt_I} \cC^{\alpha}_x \alpha
$$
for some~$\cC^{\alpha}_x \in \Q$.

In Section~\ref{section : multi}, we furnish explicit descriptions of the multiplicative structure for $H^\ast(X^\R_{A};\Q)$ of~$X^\R_{A}$, utilizing alternating permutations.
To present the main result, we briefly introduce some notations :

For $I_1 \in \binom{[n+1]}{2k}$ and $I_2 \in \binom{[n+1]}{2l}$ with $I_1 \cap I_2 =\emptyset$,
\begin{itemize}
  \item $\cR_{I_1,I_2} = \{z = (z_1 \cdots z_{2(k+l)}) \in \Alt_{I_1 \cup I_2} \mid \text{$\{z_{2i-1},z_{2i}\} \subset I_1$ or $\{z_{2i-1},z_{2i}\} \subset I_2$ for all $i$}\}$,
  \item $\rho_{I_1}(z) \in \Perm_{I_1}$ and $\rho_{I_2}(z) \in \Perm_{I_2}$ are subpermutations of $z \in \cR_{I_1,I_2}$, and
  \item for $z = (z_1\cdots z_{2(k+l)}) \in \cR_{I_1,I_2}$, $\kappa_{I_1,I_2}(z)$ is the sign of $\sigma \in \fS_{k+l}$ with
  $$
  \begin{cases}
\rho_{I_1}(z) = (z_{2\sigma(1)-1}z_{2\sigma(1)}/\cdots/z_{2\sigma(k)-1}z_{2\sigma(k)}), \mbox{ and}\\
\rho_{I_2}(z) = (z_{2\sigma(k+1)-1}z_{2\sigma(k+1)}/\cdots/z_{2\sigma(k+l)-1}z_{2\sigma(k+l)}).
\end{cases}$$
\end{itemize}

\begin{theorem}
For any $n \in \N$, there is a $\Q$-algebra isomorphism
$$
\bigoplus_{k=0}^{\lfloor \frac{n+1}{2} \rfloor} \bigoplus_{I \in \binom{[n+1]}{2k}} \frac{\Q\left\langle \Perm_I  \right\rangle}{M_{I}} \cong H^\ast(X^\R_{A_n};\Q),
$$
where the multiplicative structure $\smile \colon \frac{\Q\left\langle \Perm_{I_1}  \right\rangle}{M_{I_1}} \otimes \frac{\Q\left\langle \Perm_{I_2}  \right\rangle}{M_{I_2}} \to \frac{\Q\left\langle \Perm_{I_1 \triangle I_2}  \right\rangle}{M_{I_1 \triangle I_2}}$ is defined by, for $\alpha \in \Alt_{I_1}$ and $\beta \in \Alt_{I_2}$,
$$
\alpha \smile \beta = \begin{cases}
\underset{z \in \cR_{I_1,I_2}}{\sum}(-1)^{\kappa_{I_1,I_2}(z)} \cC^\alpha_{\rho_{I_1}(z)} \cC^\beta_{\rho_{I_2}(z)} \cdot z, & \mbox{if } I_1 \cap I_2 = \emptyset,\\
0, & \mbox{otherwise.}
\end{cases}
$$
\end{theorem}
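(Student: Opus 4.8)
The plan is to promote the additive isomorphism of Section~\ref{section : type $A$} to a multiplicative one by computing the cup product of $X^\R_{A_n}$ directly on the combinatorial model for its rational (co)homology. Recall that $X^\R_{A_n}$ is the real toric variety over the (simple) permutohedron; its mod-$2$ characteristic vectors are the reductions of the facet normals, and the associated $\Z_2$-linear functionals are indexed precisely by the even subsets $I\subseteq[n+1]$, which is the origin of the indexing set $\binom{[n+1]}{2k}$. Over $\Q$ the cup product on $H^\ast(X^\R_{A_n})$ is dual to the diagonal coproduct $\Delta_\ast$ on $H_\ast(X^\R_{A_n})$, so under the isomorphism $\bigoplus_I \Q\langle\text{Perm}_I\rangle/M_I\cong H_\ast(X^\R_{A_n})$ of Section~\ref{section : type $A$} it is enough to describe $\Delta_\ast$ in this presentation; since pairing the class $\alpha\in\text{Alt}_I$ against a $\text{Perm}_I$-class $p$ returns exactly the coefficient $\cC^\alpha_p$ of Section~\ref{section : Coefficients}, the stated formula for $\smile$ will follow by transposition once $\Delta_\ast$ is known.

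The first observation is that $\Delta_\ast$ is compatible with the $\Z_2^n$-grading in the way familiar for real toric spaces (coming from the residual action of the real torus): the $I$-summand maps into the sum of the $(I_1,I_2)$-summands with $I_1\triangle I_2=I$. A degree count then forces $I_1\cap I_2=\emptyset$ (from $|I|=|I_1|+|I_2|$ together with $|I_1\triangle I_2|=|I_1|+|I_2|-2|I_1\cap I_2|$), which is precisely the dichotomy in the statement and already accounts for the vanishing of the product when $I_1\cap I_2\ne\emptyset$. The substantive step is then to compute, for disjoint $I_1,I_2$ and a basis class $z\in\text{Alt}_{I_1\cup I_2}$, the $(I_1,I_2)$-component of $\Delta_\ast(z)$, using the explicit chain-level diagonal on the model. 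The cell $z=(z_1z_2/\cdots/z_{2(k+l)-1}z_{2(k+l)})$, viewed as a sequence of pairs, splits under the Alexander--Whitney/shuffle coproduct into a signed sum of tensors $p\otimes q$ with $p\in\text{Perm}_{I_1}$ and $q\in\text{Perm}_{I_2}$; the surviving terms are exactly those for which every consecutive pair of $z$ is monochromatic, i.e. $z\in\cR_{I_1,I_2}$, with $p=\rho_{I_1}(z)$, $q=\rho_{I_2}(z)$, and with sign the sign of the block-reordering permutation $\sigma$, namely $(-1)^{\kappa_{I_1,I_2}(z)}$. Dualizing gives that the coefficient of $z$ in $\alpha\smile\beta$ is $\langle\alpha\otimes\beta,\Delta_\ast z\rangle=(-1)^{\kappa_{I_1,I_2}(z)}\cC^\alpha_{\rho_{I_1}(z)}\cC^\beta_{\rho_{I_2}(z)}$; assembling the pieces over all $k$ and $I$ yields the $\Q$-algebra isomorphism.

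Two points require genuine care. First, one must check that the coproduct descends to the quotients $\Q\langle\text{Perm}_I\rangle/M_I$, i.e. that the Garnir relations generating $M_I$ are respected, so that the formula for $\smile$ is well defined and bilinear; this should reduce to the fact that $M_I$ corresponds, under the identifications of Section~\ref{section : type $A$}, to a space of boundaries in the model, together with naturality of the diagonal. Second --- and I expect this to be the real obstacle --- is the sign bookkeeping in the main computation: identifying the Alexander--Whitney/Koszul signs produced by the model with the combinatorial sign $\kappa_{I_1,I_2}(z)$ requires fixing a coherent system of orientations on the cells of all the pieces simultaneously and propagating it through the deformation retraction used in Section~\ref{section : type $A$}; this is elementary but delicate. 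A convenient consistency check along the way is that the resulting product must be graded-commutative and associative --- which it is, being the cup product --- and both can also be read off directly from the shuffle description.
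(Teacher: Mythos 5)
Your proposal is correct and follows essentially the same route as the paper: the ``diagonal coproduct'' you describe is exactly the map induced by the simplicial inclusion $(K_{A_n})_{I_1\triangle I_2}\hookrightarrow (K_{A_n})_{I_1}\star(K_{A_n})_{I_2}$ from the Choi--Park product formula, and your computation of its effect on the classes $[\Delta_z]$ (restrictable $z$ survive with sign $(-1)^{\kappa_{I_1,I_2}(z)}$, the rest vanish, and the case $I_1\cap I_2\neq\emptyset$ dies by a dimension count) is precisely the paper's key lemma. The one step you assert rather than prove --- that non-restrictable $z$ contribute zero --- is handled in the paper not by an Alexander--Whitney formula but by observing that a mixed pair $\{z_{2q-1},z_{2q}\}$ spans a $1$-simplex in the join, which exhibits $\Delta_z$ as a boundary there.
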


In Section~\ref{section : Coefficients}, we introduce some notions of \emph{incidence algebras}, as introduced in \cite{Rota1964}.
Each coefficients $\cC^\alpha_x$ can be regarded as values of an integral function of the incidence algebra of $\Perm_I$ with some specific partial order.

\section{Preliminaries}

Let $X$ be an $n$-dimensional smooth compact (complex) toric variety.
By the fundamental theorem for toric geometry, $X$ is completely determined by a non-singular complete fan $\Sigma \in \R^n$.
From the set $V$ of rays of $\Sigma$, we obtain
\begin{enumerate}
  \item a simplicial complex $K$ on $V$, called the \emph{underlying simplicial complex} of $X$, whose simplices correspond to cones of $\Sigma$, and
  \item a linear map $\lambda \colon V \to \Z^n$, called the \emph{characteristic map} of $X$, such that $\lambda(v)$ is the primitive vector in the direction of $v$ for each $v \in V$.
\end{enumerate}
Since $\Sigma$ can be regarded as the pair $(K,\lambda)$, $X$ is completely determined by $(K,\lambda)$.

Similarly, a real toric variety $X^{\R}$, which is the real locus of $X$, is completely determined by a pair $(K,\lambda^{\R})$, where the (mod $2$) characteristic map $\lambda^{\R} \colon V \xrightarrow{\lambda} \Z^n \xrightarrow{\text{mod} 2} \Z_2^n$.
We may set that $\vert V \vert =m$.
Then, each $\lambda^{\R}$ can be expressed by an $n \times m$ (mod $2$) matrix $\Lambda$ whose each column corresponds to the image of $\lambda^\R$ of the respective ray.
The matrix $\Lambda$ is called the (mod $2$) \emph{characteristic matrix} of $X^{\R}$.

Denote by $\text{row}(\Lambda)$ the row space of $\Lambda$.
Then, $\text{row}(\Lambda)$ is a subspace of $\Z_2^m$, and each element $S \in \text{row}(\Lambda)$ correspond to the induced subcomplex $K_S$ of $K$ with respect to $S$, where $S$ is identified with a subset of $V$ through the standard correlation between the power set of $V$ and  $\Z_2^m$; in this arrangement, each element of $V$ corresponds to a coordinate of $\Z_2^m$.

Note that computing the (co)homology of $X^{\R}$ depends only on $K$ and $\Lambda$, based the following theorem.

\begin{theorem}\cite{Choi-Park2017_multiplicative}\label{Choi-Park2017 multimplicative}
There is a $(\Z \oplus \text{row}(\Lambda))$-graded $\Q$-algebra isomorphism
$$
    H^{\ast}(X^{\R}(K),\Q) \cong \underset{S \in \text{row}(\Lambda)}\bigoplus \widetilde{H}^{\ast-1}(K_S,\Q).
$$
In particular, the product structure on $\underset{S \in \text{row}(\Lambda)}\bigoplus \widetilde{H}^{\ast}(K_S)$ is given by the canonical maps
$$
    \widetilde{H}^{k-1}(K_{S_1}) \otimes \widetilde{H}^{l-1}(K_{S_2}) \to \widetilde{H}^{k+l-1}(K_{S_1+S_2})
$$
which are induced by simplicial maps $K_{S_1+S_2} \to K_{S_1} \star K_{S_2}$, where $\star$ denotes the simplicial join.
\end{theorem}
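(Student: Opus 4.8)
The plan is to derive the multiplicative structure from the additive description of Section~\ref{section : type $A$} together with the general cup-product formula of Theorem~\ref{Choi-Park2017 multimplicative}, made explicit on the type-$A$ Coxeter complex. First I would fix the model: the underlying simplicial complex of $X_{A_n}$ is the Coxeter complex $K$ of type $A_n$, \ie the order complex of the poset of proper nonempty subsets of $[n+1]$; under the identification of $\mathrm{row}(\Lambda)$ with the $\Z_2$-vector space of even-cardinality subsets $I\subseteq[n+1]$, the element indexed by $I$ corresponds to the induced subcomplex $K_I$ on the vertex set $\{J\subsetneq[n+1]\mid\lvert I\cap J\rvert\text{ odd}\}$, and the sum of the elements indexed by $I_1$ and $I_2$ corresponds to $I_1\triangle I_2$. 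By Theorem~\ref{Choi-Park2017 multimplicative} there is then a graded ring isomorphism $H^{\ast}(X^{\R}_{A_n};\Q)\cong\bigoplus_{I}\widetilde H^{\ast-1}(K_I;\Q)$, whose product $\widetilde H^{k-1}(K_{I_1})\otimes\widetilde H^{l-1}(K_{I_2})\to\widetilde H^{k+l-1}(K_{I_1\triangle I_2})$ is the composite of the join cross-product with $\varphi^{\ast}$, where $\varphi=\varphi_{I_1,I_2}\colon K_{I_1\triangle I_2}\to K_{I_1}\star K_{I_2}$ is the canonical simplicial map of \cite{Choi-Park2017_multiplicative}. Precomposing with the isomorphisms $\mathbb{Q}\langle\mathrm{Perm}_I\rangle/M_I\xrightarrow{\ \sim\ }\widetilde H_{\lvert I\rvert/2-1}(K_I)\cong\widetilde H^{\lvert I\rvert/2-1}(K_I)$ of Section~\ref{section : type $A$} recovers the additive isomorphism asserted, so the whole task is to identify the product in these coordinates.

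If $I_1\cap I_2\neq\emptyset$, then $\lvert I_1\triangle I_2\rvert=2(k+l)-2\lvert I_1\cap I_2\rvert<2(k+l)$, whereas $\widetilde H^{j}(K_I)=0$ for $j\neq\lvert I\rvert/2-1$ by Section~\ref{section : type $A$}; hence $\widetilde H^{k+l-1}(K_{I_1\triangle I_2})=0$ and the product vanishes, which is the ``otherwise'' case. So assume $I_1\cap I_2=\emptyset$, whence $I_1\triangle I_2=I_1\cup I_2$ and $\lvert I_1\cup I_2\rvert=2(k+l)$. Since $\lvert J\cap(I_1\cup I_2)\rvert$ is odd precisely when exactly one of $\lvert J\cap I_1\rvert$, $\lvert J\cap I_2\rvert$ is odd, the vertex set of $K_{I_1\cup I_2}$ is canonically a disjoint union $V_1\sqcup V_2$ with $V_j$ contained in the vertex set of $K_{I_j}$, and $\varphi$ is the simplicial map that restricts to the inclusion on each $V_j$. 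I would then trace $\varphi$ through the cycle representatives of Section~\ref{section : type $A$}. For a pair-permutation $w$ of $I_1\cup I_2$ all of whose pairs lie entirely in $I_1$ or entirely in $I_2$, the representing $(k+l-1)$-sphere $\Sigma_w$ --- a join of the $0$-spheres attached to the pairs of $w$ --- is carried by $\varphi$ onto the join of a $(k-1)$-sphere in $K_{I_1}$ representing $\rho_{I_1}(w)$ with an $(l-1)$-sphere in $K_{I_2}$ representing $\rho_{I_2}(w)$, and regrouping these $0$-sphere factors so that the $I_1$-pairs precede the $I_2$-pairs contributes exactly the sign $(-1)^{\kappa_{I_1,I_2}(w)}$; hence, under the join Künneth isomorphism $\widetilde H_{k+l-1}(K_{I_1}\star K_{I_2})\cong\widetilde H_{k-1}(K_{I_1})\otimes\widetilde H_{l-1}(K_{I_2})$,
\[
\varphi_{\ast}[\Sigma_w]=(-1)^{\kappa_{I_1,I_2}(w)}\,[\Sigma_{\rho_{I_1}(w)}]\otimes[\Sigma_{\rho_{I_2}(w)}].
\]
If instead some pair of $w$ meets both $I_1$ and $I_2$ --- in which case there must be at least two such pairs, since $\lvert I_1\rvert$ is even --- I would show $\varphi_{\ast}[\Sigma_w]=0$: for such a mixed pair the two vertices of its $0$-sphere lie on opposite sides of the join, so the image of $\Sigma_w$ is a cycle ``suspended'' across the join, which bounds in $K_{I_1}\star K_{I_2}$.

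Granting these two computations the theorem follows by dualising. For $\alpha\in\mathrm{Alt}_{I_1}$ and $\beta\in\mathrm{Alt}_{I_2}$, regarded via Section~\ref{section : type $A$} as the cohomology classes determined by $[\Sigma_\alpha]$ and $[\Sigma_\beta]$, and for any $z\in\mathrm{Alt}_{I_1\cup I_2}$, Theorem~\ref{Choi-Park2017 multimplicative} yields $\langle\alpha\smile\beta,[\Sigma_z]\rangle=\langle\alpha\otimes\beta,\varphi_{\ast}[\Sigma_z]\rangle$; by the previous paragraph this is $0$ unless $z\in\cR_{I_1,I_2}$, and for $z\in\cR_{I_1,I_2}$ it equals
\[
(-1)^{\kappa_{I_1,I_2}(z)}\,\langle\alpha,[\Sigma_{\rho_{I_1}(z)}]\rangle\,\langle\beta,[\Sigma_{\rho_{I_2}(z)}]\rangle=(-1)^{\kappa_{I_1,I_2}(z)}\,\cC^{\alpha}_{\rho_{I_1}(z)}\,\cC^{\beta}_{\rho_{I_2}(z)},
\]
the last equality being the defining property of the coefficients $\cC$ from Section~\ref{section : Coefficients}. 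Since the families $\{[\Sigma_z]\}$ and $\{z\}$ indexed by $z\in\mathrm{Alt}_{I_1\cup I_2}$ are dual bases, reading off the coordinates of $\alpha\smile\beta$ in the basis $\mathrm{Alt}_{I_1\cup I_2}$ reproduces precisely the formula in the statement, so the additive isomorphism is an isomorphism of $\Q$-algebras.

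The step I expect to be the main obstacle is the vanishing $\varphi_{\ast}[\Sigma_w]=0$ for pair-permutations $w$ possessing a mixed pair. Making this rigorous requires either exhibiting an explicit chain-level nullhomotopy of $\varphi(\Sigma_w)$ inside $K_{I_1}\star K_{I_2}$, or carefully tracking how the parity of $\lvert J\cap I_1\rvert$ flips each time one passes a mixed pair and proving that the resulting chain cancels after projection onto the $(k-1,l-1)$ Künneth summand; this is where the combinatorics of the Garnir relations of $M_I$ and the straightening algorithm of Sections~\ref{section : type $A$} and~\ref{section : Coefficients} genuinely enters. A secondary point that must be handled with care is the orientation bookkeeping in the pure-pair computation, so that the exponent $\kappa_{I_1,I_2}$ comes out exactly as in the statement.
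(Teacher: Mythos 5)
There is a fundamental mismatch here: what you have written is not a proof of the quoted statement at all, but a proof sketch of the paper's \emph{main} theorem (Theorem~\ref{main_A}, via Lemma~\ref{lemma2}) \emph{assuming} the quoted statement. The statement in question is the general structural theorem for an arbitrary real toric space $X^{\R}(K)$ associated to a pair $(K,\Lambda)$: the additive decomposition $H^{\ast}(X^{\R}(K);\Q)\cong\bigoplus_{S\in\mathrm{row}(\Lambda)}\widetilde H^{\ast-1}(K_S;\Q)$ together with the identification of the cup product with the maps induced by the simplicial maps $K_{S_1+S_2}\to K_{S_1}\star K_{S_2}$. Your first paragraph invokes exactly this ("By Theorem~\ref{Choi-Park2017 multimplicative} there is then a graded ring isomorphism\dots"), so the argument is circular with respect to the statement you were asked to prove; nothing in the proposal addresses why such a decomposition exists for a general $(K,\Lambda)$, nor why the product is computed by $K_{S_1+S_2}\to K_{S_1}\star K_{S_2}$. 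That requires genuinely different input (in the cited source it is obtained by analyzing the $\Z_2^n$-cover of $X^{\R}(K)$ and a cochain-level description of the cup product), and the paper itself does not reprove it either: it simply cites \cite{Choi-Park2017_multiplicative}.

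As a secondary remark, the material you did write tracks the paper's own later arguments fairly closely: your "pure pair" computation and sign $(-1)^{\kappa_{I_1,I_2}(z)}$ correspond to the first half of Lemma~\ref{lemma2}, and the vanishing for a mixed pair, which you flag as the main obstacle, is handled in the paper by an elementary chain-level argument with no Garnir/straightening input: a mixed pair $\{\set^z_{2q-1},\set^z_{2q}\}$ spans a $1$-simplex of $K_{I_1}\star K_{I_2}$, so joining $\Delta_{\hat z}$ (obtained by deleting that pair) with this edge gives a chain $\mu_z$ with $\partial\mu_z=\pm\Delta_z$, whence $[\Delta_z]=0$. But none of this bears on the statement under review; as a proof of that statement the proposal has to be rejected.
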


For any irreducible root system of type $R$ in a finite dimensional Euclidean space $E$, its Weyl group gives connected components in $E$, called the \emph{Weyl chambers}.
Consider the fan $\Sigma_R$, whose maximal cones are Weyl chambers of type $R$.
Note the fact that $\Sigma_R$ is a non-singular complete fan for any irreducible root system of type $R$ \cite{Procesi1990}.
Hence, each irreducible root system of type $R$ gives the complex and real toric varieties, denoted by $X_R$ and $X_R^{\R}$, respectively.
In addition, $K_R$ denotes a underlying simplicial complex or $X_R$, $\lambda^{\R}_R$ denotes a (mod $2$) characteristic map of $X^{\R}_R$, and $\Lambda_R$ denotes a (mod $2$) characteristic matrix of $X_R^{\R}$.

In the rest of this section, we review the construction of $(K_R,\Lambda_R)$, where type $R$ is $A_n$.
As is shown in \cite{Suciu2012} (or \cite{Cho-Choi-Kaji2019}), each vertex of $K_{A_n}$ can be identified by the nonempty proper subsets of $[n+1] = \{1, \ldots, n+1\}$, and each $k$-simplex of $K_{A_n}$ can be regarded as nested $k+1$ nonempty proper subsets of $[n+1]$.
Also, the (mod $2$) characteristic map $\lambda^{\R}_{A_n}$ is defined by $\lambda^{\R}_{A_n}(L) = \sum_{k \in L}\mathbf{e}_k$ for vertex $L$ in $K_{A_n}$, where $\mathbf{e}_k$ is the $k$th standard vector of $\Z_2^n$ for $1 \leq k \leq n$, and $\mathbf{e}_{n+1} = \sum_{i=1}^{n}\mathbf{e}_i$.
Every element $S$ of $\text{row}(\Lambda_{A_n})$ is a sum of rows, say $i_1,i_2,\ldots,i_s$th rows.
Then, $S$ is identified with a subset $I_S$ of $[n+1] = \{1, \ldots, n+1\}$ whose cardinality is even in the following way:
$$
I_S=
\begin{cases}
\{i_1,i_2,\ldots,i_s\}, & \mbox{if }s\mbox{ is even} \\
\{i_1,i_2,\ldots,i_s,n+1\}, & \mbox{if }s\mbox{ is odd}
\end{cases}
$$
For $S \in \text{row}(\Lambda_{A_n})$, $(K_{A_n})_{I_S}$ is the full-subcomplex of $K_{A_n}$ consisting of the vertices $J$ such that $\vert J \cap I_S \vert$ is odd.
Also, $I_{S_1+S_2} = I_{S_1} \triangle I_{S_2}$ for $S_1$ and $S_2 \in \text{row}(\Lambda_{A_n})$, where $\triangle$ is the symmetric difference defined by $I_1 \triangle I_2 = (I_1 \cup I_2)\setminus(I_1 \cap I_2).$
Thus, throughout this paper, $I \in \binom{[n+1]}{2k}$ is regarded as an element of the row space of $\Lambda_{A_n}$.

\begin{example}
    Consider $A_7$ and $I = \{1,3,4,5\}$. Then, the set of vertices of $(K_{A_7})_I$ consists of all unions of an odd subset of $\{1,3,4,5\}$ and a subset of $\{2,6,7\}$.
    Thus, the number of vertices of $(K_{A_7})_I$ is $(\binom{4}{1} + \binom{4}{3}) \times 2^3 = 64$.
\end{example}

For $(K_{A_{n}})_{I}$, we define $\widehat{(K_{A_{n}})_{I}}$ as the subcomplex of $(K_{A_{n}})_{I}$ by iteratively removing each vertex that is not a subset of $I$ from $(K_{A_{n}})_{I}$.
From the proof of Lemma~5.3 in \cite{Choi-Park2015}, we conclude the following lemma.

\begin{lemma}\label{typeA_homotopy}
    Let $\pi \colon (K_{A_{n}})_{I} \to \widehat{(K_{A_{n}})_{I}}$ be the simplicial map defined by $\pi(\sigma) = \sigma \cap I$ for all $\sigma \in (K_{A_{n}})_{I}$.
    Then, $\pi$ is a homotopy equivalence.
\end{lemma}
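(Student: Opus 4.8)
The plan is to translate both complexes into order complexes of posets and to recognise $\pi$ as the realisation of an \emph{interior (kernel) operator}; once that is done, the homotopy equivalence is essentially automatic, and this is exactly the mechanism used in the proof of Lemma~5.3 of \cite{Choi-Park2015}.

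First I would set up the dictionary. Recall that $K_{A_n}$ is the order complex of the poset $B$ of nonempty proper subsets of $[n+1]$ ordered by inclusion, its simplices being the chains in $B$. For $I\in\binom{[n+1]}{2k}$ the full subcomplex $(K_{A_n})_I$ is then the order complex of the subposet $P=\{J\in B:|J\cap I|\text{ odd}\}$. Iteratively deleting from $(K_{A_n})_I$ every vertex not contained in $I$ — each deletion discarding the chosen vertex together with every simplex containing it — is independent of the order of the deletions, and what remains is the full subcomplex spanned by the vertices that are subsets of $I$, namely the order complex of $Q=\{J:\emptyset\ne J\subseteq I,\ |J|\text{ odd}\}$; every such $J$ has odd, hence nonzero, cardinality at most $2k-1\le n$, so it is a genuine vertex of $K_{A_n}$, and $Q\subseteq P$. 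Under this dictionary $r(J)=J\cap I$ is a well-defined order-preserving self-map of $P$ — $|J\cap I|$ is odd and $J\cap I$ is a nonempty proper subset of $[n+1]$ — with $r(P)\subseteq Q$, and its realisation on order complexes is precisely $\pi$.

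Next I would run the standard retraction argument. Let $\iota\colon Q\hookrightarrow P$ be the inclusion, realised by $j\colon\widehat{(K_{A_n})_I}\hookrightarrow(K_{A_n})_I$. On the one hand $r\circ\iota=\mathrm{id}_Q$, since $r(J)=J\cap I=J$ whenever $J\subseteq I$; thus $\pi\circ j=\mathrm{id}$. On the other hand $\iota\circ r\le\mathrm{id}_P$ pointwise, since $J\cap I\subseteq J$ for every $J\in P$; by the classical poset-homotopy principle — two comparable poset maps $f\le g$ induce an order-preserving map $P\times\{0<1\}\to P$ whose realisation, precomposed with the canonical homeomorphism $|\Delta(P)|\times[0,1]\cong|\Delta(P\times\{0<1\})|$, is a homotopy $|f|\simeq|g|$ — this gives $j\circ\pi\simeq\mathrm{id}_{(K_{A_n})_I}$. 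Hence $\pi$ and $j$ are mutually inverse homotopy equivalences, and in particular $\pi$ is a homotopy equivalence.

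The only points that require genuine care are the two bookkeeping claims in the second paragraph: that iterated vertex deletion produces precisely the full subcomplex on the subsets of $I$ (and in particular is order-independent and does not collapse anything further), and that $r(J)=J\cap I$ really lands inside $P$, in fact inside $Q$. Once $r$ is recognised as an interior operator on $P$ — order-preserving, idempotent, and everywhere $\le\mathrm{id}_P$ — with fixed-point poset exactly $Q$, the rest is formal, so I do not expect a serious obstacle beyond this bookkeeping.
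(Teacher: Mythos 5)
Your proof is correct, and it is worth noting that the paper itself contains no argument for this lemma: it simply defers to the proof of Lemma~5.3 in \cite{Choi-Park2015}, whose mechanism is the iterative, vertex-by-vertex removal suggested by the very definition of $\widehat{(K_{A_n})_I}$ (one checks that deleting a single vertex $J\not\subseteq I$ does not change the homotopy type, essentially because its star is absorbed by the star of $J\cap I$, and then induces on the number of such vertices). Your route packages the whole reduction into one step: identifying $(K_{A_n})_I$ with the order complex of $P=\{J\subsetneq[n+1]\colon |J\cap I|\ \mathrm{odd}\}$ and $\widehat{(K_{A_n})_I}$ with the order complex of the subposet $Q$ of nonempty odd-cardinality subsets of $I$, and recognizing the vertex map $r(J)=J\cap I$ as an order-preserving idempotent with $r\le\mathrm{id}_P$ and fixed-point poset exactly $Q$, so that $\pi\circ j=\mathrm{id}$ and $j\circ\pi\simeq\mathrm{id}$ by the standard order-homotopy lemma. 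The bookkeeping you flag does check out: iterated vertex deletion is order-independent and produces the full subcomplex on $\{J\subseteq I\colon |J|\ \mathrm{odd}\}$ (consistent with the paper's example for $I=\{1,3,4,5\}$ in $A_7$), and $J\cap I$ is automatically a nonempty proper vertex since it has odd cardinality and $J\cap I\subseteq J\subsetneq[n+1]$. What your argument buys is a short, self-contained proof that avoids both the induction on deleted vertices and any appeal to the external reference; what the cited step-by-step deletion buys is compatibility with the way $\widehat{(K_{A_n})_I}$ is introduced in the text and with the collapsibility/shellability viewpoint used elsewhere in \cite{Choi-Park2015}. Either way the statement is established, so I see no gap.
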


Thus, for each (co)homology of $(K_{A_n})_I$, it is unnecessary to consider outside of the subcomplex $\widehat{(K_{A_{n}})_{I}}$ of $(K_{A_n})_I$.
\begin{example}
Consider $A_7$ and $I = \{1,3,4,5\}$. By Lemma~\ref{typeA_homotopy}, since $7 \notin I$,
(a) and (b) are the same in $H^{1}((K_{A_7})_I)$ as following :
  \begin{center}
        \begin{tikzpicture}[scale = 1]
            \coordinate [label = above left:{\tiny $\{1\}$}] (1) at (-4,1);
            \coordinate [label = below right:{\tiny $\{3\}$}] (3) at (-2,-1);
            \coordinate [label = above right:{\tiny$\{1,3,4,7\}$}] (1347) at (4,1);
            \coordinate [label = below left:{\tiny$\{1,3,5,7\}$}] (1357) at (2,-1);
            \coordinate [label = above right:{\tiny $\{1,3,4\}$}] (134) at (-2,1);
            \coordinate [label = below left:{\tiny $\{1,3,5\}$}] (135) at (-4,-1);
            \coordinate [label = below right:{\tiny $\{3,7\}$}] (37) at (4,-1);
            \coordinate [label = above left:{\tiny $\{1,7\}$}] (17) at (2,1);
            \path [thick, ->] (1) edge (134) (134) edge (3) (3) edge (135) (135) edge (1);
            \draw (-3,0) node {(a)};
            \path [thick, ->] (17) edge (1347) (1347) edge (37) (37) edge (1357) (1357) edge (17);
            \draw (3,0) node {(b)};
            \draw (0,0) node {$=$};
        \end{tikzpicture}
     \end{center}
\end{example}

By \cite{Choi-Park2015}, $(K_{A_n})_I$ is a shellable complex, and, hence, it is Cohen Macaulay \cite{Stanley1996book}.
Therefore, it is homotopy equivalent to a wedge sum of spheres of the same dimension.
By calculating the Euler numbers, one can compute
\begin{equation}\label{typeA_I}
\beta_{k-1}((K_{A_n})_I) = a_{2k},
\end{equation}
for any $I \in \binom{[n+1]}{2k}$.
This, in turn, implies \eqref{typeA_betti}.

\begin{remark}
    According to \cite{Cai-Choi2021}, the integral cohomology of $X^\R$ corresponding to $(K,\Lambda)$ can be computed if $K$ is shellable.
    The $\Z_2$-Betti numbers align with the $h$-numbers of $K$, and other torsions are determined by the torsion of $K_S$ for $S \in \text{row}(\Lambda)$.
    In the case of permutohedral varieties, as $K_S$ has no torsion, $X^\R_{A_n}$ only exhibits $\Z_2$-torsions, which provides the integral cohomology group of $X^\R_{A_n}$.
\end{remark}

\section{Permutations as elements in $H^\ast(X^\R_{A_n})$}\label{section : type $A$}

Let $I \in \binom{[n+1]}{2k}$ be a subset of $[n+1] = \{1, \ldots, n+1\}$ whose cardinality is $2k$.
Let us consider the set $\Perm_I$ of permutations on $I$.
For each $x \in \Perm_I$, we use the one-line notation $(x_1x_2/\cdots/x_{2k-1}x_{2k})$ to represent $x$ which is grouped into blocks of length two, separated by a slash, where $x^{-1}(x_1) < x^{-1}(x_2) < \cdots < x^{-1}(x_{2k})$.
For each permutation $x=(x_1x_2/\cdots/x_{2k-1}x_{2k}) \in \Perm_I$ and $1 \leq i \leq 2k$, we introduce the notation
$$
    \set^x_i = \left\{
                 \begin{array}{ll}
                   \{x_i\}, & \hbox{if $i = 1, 2$;} \\
                   \{x_1, x_2, \ldots, x_{2\lfloor \frac{i-1}{2} \rfloor}, x_i \}, & \hbox{if $i \geq 3$,}
                 \end{array}
               \right.
$$
and we associate to $x$ a subcomplex $\Delta_x$ of $(K_{A_n})_I$ as
$$\Delta_x = \{ \set^x_1 , \set^x_2\} \star \{\set^x_3,\set^x_4\} \star \cdots \star \{\set^x_{2k-1},\set^x_{2k}\},$$
where $\star$ is the simplicial join.
Note that $\Delta_x$ is the boundary complex of the cross-polytope of dimension~$k$ with fixed orientation embedded in $(K_{A_n})_I$.

\begin{example}
    Let $x = (x_1x_2)$ be a permutation of length~$2$.
    Then $\Delta_{(12)}$ is $\{\{x_1\}, \{x_2\}\}$, and is homeomorphic to the $0$-dimensional sphere $S^0$.
    Let $x=(x_1 x_2 /x_3 x_4)$ be a permutation of length~$4$.
    Then, $\Delta_{(x_1 x_2 /x_3 x_4)}$ (with the orientation), homeomorphic to the $1$-dimensional sphere $S^1$, is as follows.
     \begin{center}
        \begin{tikzpicture}[scale = 1]
            \coordinate [label = above left:{\tiny $\set^x_1 = \{x_{1}\}$}] (1) at (-1,1);
            \coordinate [label = below right:{\tiny $\set^x_2 = \{x_{2}\}$}] (2) at (1,-1);
            \coordinate [label = above right:{\tiny $\set^x_3 = \{x_{1},x_2,x_3\}$}] (123) at (1,1);
            \coordinate [label = below left:{\tiny $\set^x_4 = \{x_1,x_2,x_4\}$}] (124) at (-1,-1);
            \path [thick, ->] (1) edge (123) (123) edge (2) (2) edge (124) (124) edge (1);
            \draw (0,0) node {$\Delta_{(x_1 x_2 /x_3 x_4)}$};
        \end{tikzpicture}
     \end{center}
\end{example}

Let $\Phi_I \colon \Q\left\langle \Perm_I \right\rangle \to \widetilde{H}_{k-1}((K_{A_n})_I)$ be a linear map defined by
$$
    \Phi_I(x) = [\Delta_x].
$$

\begin{example}\label{A7}
	Consider the type $A_7$. Let $I = \{1,3,4,5\} \in \binom{[8]}{4}$. Then,
    $\widehat{(K_{A_7})}_{I}$ is
     \begin{center}
        \begin{tikzpicture}[scale = 1]
            \coordinate [label = below right:{\small $\{1\}$}] (1) at (-1,1);
            \coordinate [label = above left:{\small $\{3\}$}] (2) at (1,-1);
            \coordinate [label = above right:{\small$\{4\}$}] (3) at (3,2);
            \coordinate [label = below left:{\small$\{5\}$}] (4) at (-3,-2);
            \coordinate [label = below right:{\tiny $\{1,3,4\}$}] (123) at (1,1);
            \coordinate [label = above left:{\tiny $\{1,3,5\}$}] (124) at (-1,-1);
            \coordinate [label = below:{\tiny $\{3,4,5\}$}] (234) at (3,-2);
            \coordinate [label = above:{\tiny $\{1,4,5\}$}] (134) at (-3,2);
            \draw [ultra thick, miter limit=2] (1)--(123)--(2)--(124)--cycle;
            \draw [ultra thick, miter limit=2] (3)--(134)--(4)--(234)--cycle;
            \draw [ultra thick, miter limit=2] (1)--(134);
            \draw [ultra thick, miter limit=2] (2)--(234);
            \draw [ultra thick, miter limit=2] (3)--(123);
            \draw [ultra thick, miter limit=2] (4)--(124);
            \draw (0,0) node {$\Delta_{(13 /45)}$};
            \draw (0,1.5) node {$\Delta_{(14 /35)}$};
            \draw (0,-1.5) node {$\Delta_{(35 /14)}$};
            \draw (-2,0) node {$\Delta_{(15 /34)}$};
            \draw (2,0) node {$\Delta_{(34 /15)}$};
        \end{tikzpicture},
     \end{center}
     where the outer square-shape complex is $\Delta_{(45/ 13)}$.

     In particular,
\begin{align*}
    \Phi_I((15/34)) &= [\{ \set^{(15/34)}_1 , \set^{\tiny (15/34)}_2\} \star \{\set^{\tiny (15/34)}_3,\set^{\tiny (15/34)}_4\}] \\
    &= [\{\{1\},\{5\}\}\star\{\{1,5,3\},\{1,5,4\}\}] \\
    &= [\{\{1\},\{1,5,3\}\}-\{\{1\},\{1,5,4\}\}-\{\{5\},\{1,5,3\}\}+\{\{5\},\{1,5,4\}\}].
\end{align*}
 \end{example}

\begin{remark} \label{rem:relations}
We note that $\Phi_I(x)$ is the homology class with respect to $\Delta_x$.
For a simplicity, if we set $x = (\bar{x}/ x_{2i-1} x_{2i} / \bar{\bar{x}})$, then $\Delta_{(\bar{x}/ x_{2i} x_{2i-1}/ \bar{\bar{x}})}$ is the same complex to $\Delta_x$ with reversed orientation.
In addition, if we set $x = (\bar{x}/ x_{2i-1} x_{2i} / x_{2i+1} x_{2i+2} / \bar{\bar{x}})$, we have $(K_{A_n})_I$ contains the subcomplex $\Delta_{\bar{x}} \star (K_{A_n})_{\{x_{2i-1}, x_{2i},x_{2i+1},x_{2i+2}\}} \star \Delta_{\bar{\bar{x}}}$.
Therefore, the topological structure of $(K_{A_n})_I$ gives the relations between the image of $\Phi_I$ as following:

\begin{enumerate}
    \item $\Phi_I(x_1\cdots /x_{2i-1}x_{2i}/\cdots x_{2k}) = - \Phi_I(x_1\cdots /x_{2i}x_{2i-1}/\cdots x_{2k})$
    \item $\Phi_I((x_1\cdots/x_{2i-1}x_{2i}/x_{2i+1}x_{2i+2}/\cdots x_{2k}))=\Phi_I((x_1\cdots/x_{2i-1}x_{2i+1}/x_{2i}x_{2i+2}/\cdots x_{2k}))$\\
        $-\Phi_I((x_1\cdots/x_{2i-1}x_{2i+2}/x_{2i}x_{2i+1}/\cdots x_{2k}))-\Phi_I((x_1\cdots/x_{2i}x_{2i+1}/x_{2i-1}x_{2i+2}/\cdots x_{2k}))$\\
        $+\Phi_I((x_1\cdots/x_{2i}x_{2i+2}/x_{2i-1}x_{2i+1}/\cdots x_{2k}))-\Phi_I((x_1\cdots/x_{2i+1}x_{2i+2}/x_{2i-1}x_{2i}/\cdots x_{2k}))$\\
        for $1 \leq i \leq k-1$.
\end{enumerate}
\end{remark}

Note that the permutation group $\fS_{2k}$ on the index set $\{1,\ldots,2k\}$ acts on the set~$\Perm_I$ by
$$\sigma \cdot x = (x_{\sigma(1)}x_{\sigma(2)}/\cdots/x_{\sigma(2k-1)}x_{\sigma(2k)})$$
for each $x=(x_1x_2/\cdots/x_{2k-1}x_{2k}) \in \Perm_I$ and $\sigma \in \fS_{2k}$.
For convenience, we define subgroups of $\fS_{2k}$
\begin{enumerate}
  \item $\fS_{2k}^{\text{even}} =\{\sigma_1\sigma_2\cdots\sigma_{k} \mid \sigma_i \in \fS_{\{2i-1,2i\}} \text{ for } 1\leq i \leq k\}$, and
  \item $\fS_{2k}^{\text{odd}} =\{\sigma_1\sigma_2\cdots\sigma_{k-1}\mid \sigma_i \in \fS_{\{2i,2i+1\}} \text{ for } 1\leq i \leq k-1\}$.
\end{enumerate}

Let $\varphi_I : \Perm_I \to (K_{A_n})_{I}$ defined by
$$
\varphi_I(x) = \{\set^x_1,\set^x_3,\ldots,\set^x_{2k-1}\}
$$
for $x \in \Perm_I$.

\begin{lemma}\label{face_lemma}
Let $x$ and $y$ be permutations on $I$.
Then, $\varphi_I(x)$ is a face of $\Delta_y$ if and only if there are $\tau_{x,y} \in \fS_{2k}^{\text{even}}$ and $\sigma_{x,y} \in \fS_{2k}^{\text{odd}}$ such that $\tau_{x,y} \cdot \sigma_{x,y} \cdot x = y$.
Furthermore, $\tau_{x,y}$ and $\sigma_{x,y}$ are uniquely determined for $x$ and $y$.
\end{lemma}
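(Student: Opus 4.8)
The plan is to translate the geometric condition ``$\varphi_I(x)$ is a face of $\Delta_y$'' into an equality of chains of subsets of $I$, and then to extract the required elements of $\fS_{2k}^{\text{even}}$ and $\fS_{2k}^{\text{odd}}$ from the combinatorics of the sets $\set^{\,\cdot}_i$. First I would observe that the $k$ vertices $\set^x_1,\set^x_3,\dots,\set^x_{2k-1}$ of $\varphi_I(x)$ are pairwise distinct, being nested with cardinalities $1,3,\dots,2k-1$, so $\varphi_I(x)$ is a $(k-1)$-simplex; since $\Delta_y$ is the boundary complex of a $k$-dimensional cross-polytope and hence $(k-1)$-dimensional, $\varphi_I(x)$ is a face of $\Delta_y$ if and only if it is a facet of $\Delta_y$. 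The facets of $\Delta_y=\{\set^y_1,\set^y_2\}\star\cdots\star\{\set^y_{2k-1},\set^y_{2k}\}$ are exactly the sets $\{\set^y_{\epsilon_1},\dots,\set^y_{\epsilon_k}\}$ with $\epsilon_i\in\{2i-1,2i\}$; comparing cardinalities block by block, $\varphi_I(x)$ equals such a facet if and only if there is a choice $(\epsilon_1,\dots,\epsilon_k)$ with $\set^x_{2i-1}=\set^y_{\epsilon_i}$ for every $i$.

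The core of the argument will be two claims about the action of $\fS_{2k}$ on $\text{Perm}_I$. Claim~1: for $\sigma\in\fS_{2k}^{\text{odd}}$ one has $\varphi_I(\sigma\cdot x)=\varphi_I(x)$, and conversely, if $\varphi_I(x')=\varphi_I(x)$ then $x'=\sigma\cdot x$ for a unique $\sigma\in\fS_{2k}^{\text{odd}}$. Claim~2: for every $\tau\in\fS_{2k}^{\text{even}}$ one has $\Delta_{\tau\cdot x}=\Delta_x$, and $\tau\mapsto\varphi_I(\tau\cdot x)$ is a bijection from $\fS_{2k}^{\text{even}}$ onto the set of facets of $\Delta_x$. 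For Claim~1 the key point is that the generator of $\fS_{2k}^{\text{odd}}$ indexed by $i$, which swaps the entries in positions $2i$ and $2i+1$, changes only $\set^x_{2i}$ and leaves every odd-indexed set $\set^x_{2j-1}$ unchanged, so $\varphi_I$ is unaffected; conversely the equalities $\set^{x'}_{2j-1}=\set^x_{2j-1}$ force $x'_1=x_1$, $x'_{2k}=x_{2k}$ and $\{x'_{2j},x'_{2j+1}\}=\{x_{2j},x_{2j+1}\}$ for $1\le j\le k-1$, which pins down $\sigma$ block by block. For Claim~2 the generator of $\fS_{2k}^{\text{even}}$ indexed by $i$ swaps positions $2i-1$ and $2i$, exchanging $\set^x_{2i-1}$ with $\set^x_{2i}$ and fixing all other sets $\set^x_\bullet$, hence preserving the unordered pairs making up the join $\Delta_x$; flipping exactly the blocks $i$ with prescribed $\epsilon_i=2i$ produces $\varphi_I(\tau\cdot x)=\{\set^x_{\epsilon_1},\dots,\set^x_{\epsilon_k}\}$, giving surjectivity, hence bijectivity by the cardinality count $|\fS_{2k}^{\text{even}}|=2^k$. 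I expect this bookkeeping about how initial-segment sets transform under adjacent transpositions to be the only step requiring genuine care.

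Granting the two claims, the equivalence follows quickly. If $y=\tau\cdot(\sigma\cdot x)$ with $\tau\in\fS_{2k}^{\text{even}}$ and $\sigma\in\fS_{2k}^{\text{odd}}$, then by Claim~1 $\varphi_I(x)=\varphi_I(\sigma\cdot x)$ is a facet of $\Delta_{\sigma\cdot x}$, and by Claim~2 $\Delta_{\sigma\cdot x}=\Delta_{\tau\cdot(\sigma\cdot x)}=\Delta_y$, so $\varphi_I(x)$ is a face of $\Delta_y$. Conversely, if $\varphi_I(x)$ is a face, hence a facet, of $\Delta_y$, then Claim~2 applied to $y$ gives a unique $\tau\in\fS_{2k}^{\text{even}}$ with $\varphi_I(\tau\cdot y)=\varphi_I(x)$, and then Claim~1 gives a unique $\sigma\in\fS_{2k}^{\text{odd}}$ with $\tau\cdot y=\sigma\cdot x$; since $\tau$ is an involution this yields $y=\tau\cdot(\sigma\cdot x)$, so we may take $\tau_{x,y}=\tau$ and $\sigma_{x,y}=\sigma$.

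Finally, for uniqueness I would use that the $\fS_{2k}$-action on $\text{Perm}_I$ is free (a permutation on $I$ has distinct one-line entries) together with $\tau\cdot(\sigma\cdot x)=(\sigma\tau)\cdot x$: if $\tau_1\cdot(\sigma_1\cdot x)=\tau_2\cdot(\sigma_2\cdot x)$ then $\sigma_1\tau_1=\sigma_2\tau_2$, so $\sigma_2^{-1}\sigma_1=\tau_2\tau_1^{-1}$ lies in $\fS_{2k}^{\text{even}}\cap\fS_{2k}^{\text{odd}}$. This intersection contains only the identity: an element of it fixes $1$ (being in $\fS_{2k}^{\text{odd}}$), hence fixes $2$ (being in $\fS_{2k}^{\text{even}}$), hence fixes $3$ (being in $\fS_{2k}^{\text{odd}}$), and so on by induction. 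Therefore $\sigma_1=\sigma_2$ and $\tau_1=\tau_2$, which gives the asserted uniqueness.
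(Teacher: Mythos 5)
Your proof is correct and follows essentially the same route as the paper's: both identify $\sigma_{x,y}$ from the requirement that the odd-indexed initial segments $\set^x_{2j-1}$ match blocks of $y$, and then $\tau_{x,y}$ from matching within each block. Your version is more carefully organized (isolating that $\fS_{2k}^{\text{odd}}$ stabilizes $\varphi_I$ and $\fS_{2k}^{\text{even}}$ stabilizes $\Delta_x$, and deriving uniqueness from the freeness of the action together with $\fS_{2k}^{\text{even}}\cap\fS_{2k}^{\text{odd}}=\{e\}$), but the underlying combinatorics is the same.
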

\begin{proof}
    The `if' part is clear. For the opposite direction, assume that the simplex $\varphi_I(x)$ is a face of $\Delta_y$.
    Then, $x_{1}$ (respectively, $x_{2k}$) must be a member of $\{y_{1},y_{2}\}$ (respectively, $\{y_{2k-1},y_{2k}\}$).
    For each $1 \leq i < k$, $\alpha_{2i}$ and $\alpha_{2i+1}$ must fulfill exactly one of the following conditions:
    \begin{enumerate}
      \item $x_{2i} \in \{y_{2i-1},y_{2i}\}, x_{2i+1} \in \{y_{2i+1},y_{2i+2}\}$, or
      \item $x_{2i} \in \{y_{2i+1},y_{2i+2}\}, x_{2i+1} \in \{y_{2i-1},y_{2i}\}$.
    \end{enumerate}
    Thus there uniquely exists $\sigma_{x,y} \in \fS_{2k}^{\text{odd}}$ such that
    $$
    \{y_{2i-1},y_{2i}\} = \{(\sigma_{x,y}\cdot x)_{2i-1},(\sigma_{x,y}\cdot x)_{2i}\}.
    $$
    Moreover, there uniquely exists $\tau_{x,y} \in \fS_{2k}^{\text{even}}$ such that $\tau_{x,y} \cdot (\sigma_{x,y} \cdot x) = y$ as desired.
\end{proof}

An \emph{alternating permutation}, also known as a \emph{zig-zag permutation}, on $I$ is defined as a permutation within $\Perm_I$ that presents an oscillating pattern where each entry is either larger or smaller than the one before it in an alternating manner.
We will refer to the collection of these permutations on $I$ as $\Alt_I$.
Note that $\varphi_I \vert_{\Alt_I} \colon \Alt_I \to (K_{A_n})_I$ is injective.

It is worth noting that the rank of $\widetilde{H}_{k-1}((K_{A_{n}})_{I})$ is $a_{2k}$ as seen in \eqref{typeA_I}, and $a_{2k}$ represents the cardinality of $\Alt_I$.
We firstly aim to demonstrate that the image under the mapping $\Phi_I$, denoted by $\Phi_I (\Alt_I)$, indeed forms a $\Q$-basis for the space $\widetilde{H}_{k-1}((K_{A_{n}})_{I})$.

We now introduce the \emph{right weak order} $\leq_R$ on $\Perm_I$, as detailed in \cite{Bjorner_book2005}. 
For a given permutation $x \in \Perm_I$, the set of inversions, denoted by $\Inv(x)$, is defined as
$$
\Inv(x) = \{(x_i,x_j) \in I \times I \mid 1 \leq i < j \leq 2k \text{ and } x_i > x_j \}.
$$
Given permutations $x$ and $y$ on $I$, $x \leq_R y$ if $\Inv(x) \subseteq \Inv(y)$.
By Lemma~\ref{face_lemma}, one can directly show that
\begin{equation}\label{min_element}
  \alpha \in \Alt_I \text{ is a maximal element of } (\{x \in \Perm_I \mid \varphi_I(\alpha) \text{ is a face of } \Delta_x \},\leq_R).
\end{equation}
Consequently, we conclude that
$\{\beta \in \Alt_I \mid \varphi_I(\alpha) \text{ is a face of } \Delta_\beta\} = \{\alpha\}$
for each minimal element $\alpha$ of $(\Alt_I,\leq_R\vert_{\Alt_I})$.

\begin{lemma}\label{lemma:basis_A}
    The image $\Phi_I(\Alt_I) = \{\Phi_{I}(x) \mid x \in \Alt_I\}$ is a $\Q$-basis of $\widetilde{H}_{k-1}((K_{A_{n}})_{I})$.
\end{lemma}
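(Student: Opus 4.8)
The plan is to show that $\Phi_I(\text{Alt}_I)$ is a spanning set of $\widetilde{H}_{k-1}((K_{A_n})_I)$; since $|\text{Alt}_I| = a_{2k} = \operatorname{rank} \widetilde{H}_{k-1}((K_{A_n})_I)$ by \eqref{typeA_I}, any spanning set of that cardinality is automatically a basis. So the whole burden is the spanning claim, and the natural way to get it is a triangularity argument with respect to the order $\prec$ defined above.

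First I would recall that each cross-polytope boundary $\Delta_x$ for $x \in \text{Perm}_I$ carries a fundamental class $[\Delta_x] = \Phi_I(x)$ in $\widetilde{H}_{k-1}((K_{A_n})_I)$, and that the simplices $\varphi_I(y)$, as $y$ ranges over $\text{Perm}_I$, are exactly the $(k-1)$-simplices of $\widehat{(K_{A_n})_I}$ whose vertices all lie inside $I$; together with Lemma~\ref{typeA_homotopy} these generate $\widetilde{H}_{k-1}$. The key combinatorial input is Lemma~\ref{face_lemma}: $\varphi_I(y)$ is a face of $\Delta_x$ precisely when $y$ is obtained from $x$ by a unique pair $\tau_{x,y} \in \fS_{2k}^{\text{even}}$, $\sigma_{x,y} \in \fS_{2k}^{\text{odd}}$, and when this happens the coefficient of $\varphi_I(y)$ in the chain representing $[\Delta_x]$ is $\pm 1$ (the sign being $\operatorname{sgn}(\tau_{x,y}\sigma_{x,y})$, reading off the chosen orientation of the cross-polytope). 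Combined with \eqref{min_element}, this says: for $x \in \text{Perm}_I$, the minimal element of $\{\, y \in \text{Perm}_I \mid \varphi_I(y) \text{ is a face of } \Delta_x \,\}$ under $\prec$ is an alternating permutation $\alpha(x)$, and moreover the map $x \mapsto \alpha(x)$ restricted to $\text{Alt}_I$ is the identity. Hence, expanding each cohomology class in the basis of cochains dual to the simplices $\varphi_I(y)$, the matrix $\bigl(\langle \Phi_I(\alpha),\, \varphi_I(\beta)^\vee\rangle\bigr)_{\alpha,\beta \in \text{Alt}_I}$ is triangular with respect to $\prec$ with $\pm 1$ on the diagonal, so it is invertible over $\Q$ (indeed over $\Z$).

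From here I would argue that the $\Phi_I(\alpha)$ span all of $\widetilde{H}_{k-1}((K_{A_n})_I)$. One clean way: for an arbitrary $x \in \text{Perm}_I$, the relations in Remark~\ref{rem:relations}(1)--(2), which express $\Phi_I(x)$ in terms of $\Phi_I$ of permutations that are strictly smaller in $\prec$ (after using the Garnir/straightening relations to rewrite any non-alternating adjacency), let one run a descending induction on $\prec$ to rewrite every $\Phi_I(x)$ as a $\Q$-linear combination of $\Phi_I(\alpha)$ with $\alpha \in \text{Alt}_I$. Since the classes $[\Delta_x]$, $x \in \text{Perm}_I$, already generate $\widetilde{H}_{k-1}((K_{A_n})_I)$ — the complex $\widehat{(K_{A_n})_I}$ is $(k-1)$-dimensional and every top simplex $\varphi_I(y)$ sits inside some $\Delta_x$ whose fundamental class detects it — this shows $\Phi_I(\text{Alt}_I)$ spans. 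Counting dimensions then finishes: a spanning set of size $a_{2k}$ in a space of rank $a_{2k}$ is a basis.

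The main obstacle, and the step I would spend the most care on, is the bookkeeping that turns \eqref{min_element} and Lemma~\ref{face_lemma} into an honest triangular change of basis: I need to verify that the ``leading term'' $\varphi_I(\alpha(x))$ really does appear with a nonzero (hence $\pm 1$) coefficient in the simplicial cycle $[\Delta_x]$ — this is where the fixed orientation of the cross-polytope matters and where one must check no cancellation occurs among the $2^k$ vertices of $\Delta_x$ mapping to the same simplex of $\widehat{(K_{A_n})_I}$ after the collapse $\pi$ of Lemma~\ref{typeA_homotopy} — and that the induced order on $\{\varphi_I(y)\}$ is compatible enough with $\prec$ that simultaneous triangularity over all $\alpha \in \text{Alt}_I$ holds. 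Once that is pinned down, invertibility of the diagonal-$\pm 1$ triangular matrix is immediate and the linear independence half of the statement follows without the Garnir relations at all; the relations are only needed for the spanning half, and even there they could be replaced by the dimension count if one prefers. I expect the orientation/no-cancellation check to be the only genuinely delicate point.
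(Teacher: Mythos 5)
Your final formulation --- triangularity of the pairing of $\Phi_I(\alpha)$ with the dual simplices $\varphi_I(\beta)^\vee$ (via \eqref{min_element} and the $\pm1$ coefficients guaranteed by Lemma~\ref{face_lemma}) gives linear independence, and the count $\vert\text{Alt}_I\vert=a_{2k}=\operatorname{rank}\widetilde{H}_{k-1}((K_{A_n})_I)$ from \eqref{typeA_I} upgrades it to a basis --- is exactly the paper's proof, and the ``delicate'' cancellation check you flag is moot because every $\set^x_i$ is a subset of $I$, so $\Delta_x$ already lies in $\widehat{(K_{A_n})_I}$ and $\pi$ does nothing to it. One caution: the spanning-first route you lead with rests on the unjustified claim that the classes $[\Delta_x]$, $x\in\text{Perm}_I$, generate $\widetilde{H}_{k-1}$ (that every top simplex lies in some $\Delta_x$ does not imply this), so you should present the linear-independence-plus-dimension-count version you describe at the end rather than that alternative.
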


\begin{proof}
By Lemma~\ref{typeA_homotopy}, it is sufficient to show that $\{\Phi_{I}(x) \mid x \in \Alt_I\}$ is a $\Q$-basis of $\widetilde{H}_{k-1}(\widehat{(K_{A_{n}})_{I}})$.
Since the dimension~of $\widehat{(K_{A_{n}})_{I}}$ is $k-1$, every element of $\widetilde{H}_{k-1}(\widehat{(K_{A_{n}})_{I}})$ can be regarded as a cycle of  $C_{k-1}((\widehat{K_{A_{n}})_{I}})$. Suppose that a $\Q$-linear combination $\cX$ of $\{\Delta_\alpha \mid \alpha \in \Alt_I\}$ becomes zero.
Let $\mathfrak{m}$ be the set of minimal elements of $(\Alt_I,\leq_R\vert_{\Alt_I})$.
For each elements $\alpha$ and $\beta$ in~$\mathfrak{m}$, a simplex $\varphi_I(\alpha)$ is contained in $\Delta_\beta$ if and only if $\alpha = \beta$.
Thus, the coefficient of $\alpha$ in $\cX$ is zero for all $\alpha \in \mathfrak{m}$.
By similar way, for all minimal elements $\alpha$ of
$$
(\Alt_I \setminus \mathfrak{m} , \leq_R\vert_{\Alt_I \setminus \mathfrak{m}}),
$$
the coefficient of $\alpha$ in $\cX$ is zero.
By repeating this process, it can be shown that all coefficients of $\cX$ are zero.
Therefore, we have $\Phi_I(\Alt_I)$ is $\Q$-linearly independent.
\end{proof}

We define a submodule $M_I$ of $\Q\left\langle \Perm_I \right\rangle$ generated by

\begin{enumerate}
    \item $(x_1\cdots /x_{2i-1}x_{2i}/\cdots x_{2k}) + (x_1\cdots /x_{2i}x_{2i-1}/\cdots x_{2k})$ for $1 \leq i \leq k$,
    \item $(x_1\cdots/x_{2i-1}x_{2i}/x_{2i+1}x_{2i+2}/\cdots x_{2k}) -(x_1\cdots/x_{2i-1}x_{2i+1}/x_{2i}x_{2i+2}/\cdots x_{2k})$\\ $+(x_1\cdots/x_{2i-1}x_{2i+2}/x_{2i}x_{2i+1}/\cdots x_{2k}) +(x_1\cdots/x_{2i}x_{2i+1}/x_{2i-1}x_{2i+2}/\cdots x_{2k})$\\ $-(x_1\cdots/x_{2i}x_{2i+2}/x_{2i-1}x_{2i+1}/\cdots x_{2k}) +(x_1\cdots/x_{2i+1}x_{2i+2}/x_{2i-1}x_{2i}/\cdots x_{2k})$\\ for $1 \leq i \leq k-1$.
\end{enumerate}

\begin{theorem}\label{vs_iso}
    The quotient module $\Q\left\langle \Perm_I \right\rangle / M_I$ is isomorphic to $\widetilde{H}_{k-1}((K_{A_{n}})_{I})$ as a $\Q$-vector space.
\end{theorem}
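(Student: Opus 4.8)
The plan is to show that the map $\Phi_I$ of Section~\ref{section : type $A$} descends to $\Q\langle\text{Perm}_I\rangle/M_I$ and is an isomorphism, by pinching a dimension count between surjectivity and a straightening argument. First I would check that $M_I\subseteq\Ker\Phi_I$: the two families of generators of $M_I$ are precisely the elements on which $\Phi_I$ vanishes according to relations~(1) and~(2) of Remark~\ref{rem:relations} once all terms of those relations are collected on one side. Indeed relation~(1) there reads $\Phi_I(x_1\cdots/x_{2i-1}x_{2i}/\cdots x_{2k})+\Phi_I(x_1\cdots/x_{2i}x_{2i-1}/\cdots x_{2k})=0$, and relation~(2) is exactly the vanishing of $\Phi_I$ on the six-term generator~(2). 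Hence $\Phi_I$ factors through a $\Q$-linear map
\[
\bar\Phi_I\colon \Q\langle\text{Perm}_I\rangle/M_I\longrightarrow\widetilde H_{k-1}((K_{A_n})_I),
\]
which is surjective because, by Lemma~\ref{lemma:basis_A}, $\Phi_I(\text{Alt}_I)$ already spans the target; that lemma together with \eqref{typeA_I} also records $\dim_\Q\widetilde H_{k-1}((K_{A_n})_I)=a_{2k}=|\text{Alt}_I|$. So the theorem reduces to the inequality $\dim_\Q\Q\langle\text{Perm}_I\rangle/M_I\le a_{2k}$, i.e. to showing that the classes of the alternating permutations span the quotient.

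For the spanning statement I would run a straightening algorithm on permutations modulo $M_I$. Using generator~(1), every permutation equals, modulo $M_I$, $\pm1$ times a \emph{block-increasing} permutation, i.e. one with $x_{2i-1}<x_{2i}$ for every block $i$; this normalisation does not alter the vector of block sums $(x_1+x_2,\dots,x_{2k-1}+x_{2k})$, on which the order $\prec$ is the lexicographic order read from the last block towards the first. Now let $x$ be block-increasing but not alternating. Then there is $1\le i\le k-1$ with $x_{2i}<x_{2i+1}$, and block-increasingness forces $x_{2i-1}<x_{2i}<x_{2i+1}<x_{2i+2}$; applying generator~(2) at this index $i$ and solving for its leading term expresses $x$ modulo $M_I$ as a $\pm1$-combination of the five block-increasing permutations obtained from $x$ by redistributing $\{x_{2i-1},x_{2i},x_{2i+1},x_{2i+2}\}$ between blocks $i$ and $i+1$ (and re-sorting each block). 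Each of these five agrees with $x$ in blocks $i+2,\dots,k$, while its $(i+1)$st block sum is one of $x_{2i-1}+x_{2i}$, $x_{2i-1}+x_{2i+1}$, $x_{2i-1}+x_{2i+2}$, $x_{2i}+x_{2i+1}$, $x_{2i}+x_{2i+2}$, all strictly smaller than $x_{2i+1}+x_{2i+2}$; hence every replacement term is strictly $\prec x$.

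Since $\prec$ is a strict partial order on the finite set $\text{Perm}_I$, it is well-founded, so iterating the rewriting above terminates, and the output is a $\Q$-linear combination of alternating permutations. This yields $\dim_\Q\Q\langle\text{Perm}_I\rangle/M_I\le a_{2k}$, and combined with surjectivity of $\bar\Phi_I$ onto an $a_{2k}$-dimensional space it forces $\bar\Phi_I$ to be an isomorphism (and shows, as a byproduct, that $\text{Alt}_I$ maps to a $\Q$-basis of the quotient). The one genuinely delicate point is this straightening step: after normalising to block-increasing form, one must be sure that the six-term relation~(2) really rewrites the ``all-ascending'' term into strictly $\prec$-smaller block-increasing terms, so that the procedure cannot cycle; the block-sum comparison above is exactly what secures termination and is the technical heart of the argument.
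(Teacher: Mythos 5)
Your proposal is correct and follows essentially the same route as the paper: factor $\Phi_I$ through the quotient via Remark~\ref{rem:relations}, get surjectivity from Lemma~\ref{lemma:basis_A}, and then show $\mathrm{Alt}_I$ spans $\Q\langle \mathrm{Perm}_I\rangle/M_I$ by block-sorting with generator (1) and rewriting with generator (2), with termination controlled by the order $\prec$. Your explicit verification that the five replacement terms have strictly smaller $(i+1)$st block sum is a slightly more detailed justification of the step the paper phrases as ``$(x_{2i-1}x_{2i}/x_{2i+1}x_{2i+2})$ is the maximal element,'' but the argument is the same.
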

\begin{proof}
    We consider the map $\Q\left\langle \Perm_I \right\rangle / M_I \to \widetilde{H}_{k-1}((K_{A_{n}})_{I})$ induced by $\Phi_I$.
    By Remark~\ref{rem:relations}, it is well-defined, and, by Lemma~\ref{lemma:basis_A}, it is surjective.
    Hence, it is enough to show that every permutation can be expressed as a linear combination of alternating permutation by the relations.

    By the first relation, we may assume that each permutation
    $$
    y = (y_1y_2/\cdots/y_{2k-1}y_{2k}) \in \Q\left\langle \Perm_I \right\rangle / M_I
    $$
    satisfies $y_{2i-1} < y_{2i}$ for all $1 \leq i \leq k$.
    For any given $$x = (x_1\cdots/x_{2i-1}x_{x_{2i}}/x_{2i+1}x_{2i+2}/\cdots x_{2k}) \in \Perm_I,$$ if $x_{2i} < x_{2i+1}$, then $(x_{2i-1}x_{2i}/x_{2i+1}x_{2i+2})$ is the minimal element of $$(\Perm_{\{x_{2i-1},x_{2i},x_{2i+1},x_{2i+2}\}},\leq_R).$$
    Thus, by taking the second relation for $\{x_{2i-1},x_{2i},x_{2i+1},x_{2i+2}\}$, we observe that $x$ can be expressed by a linear sum of $y$s satisfying $x \leq_R y$.
    We do this procedure recursively, and it has to terminate in finite steps, which proves the theorem.

\end{proof}

Thus, we focus $\Q$-vector space isomorphisms
$$\Q\left\langle \Alt_I \right\rangle \overset{\iota}{\longrightarrow}  \frac{\Q\left\langle \Perm_I  \right\rangle}{M_I} \overset{\bar{\Phi}_I}{\longrightarrow} \widetilde{H}_{k-1}((K_{A_{n}})_{I}) \overset{\ast}{\longrightarrow} \widetilde{H}^{k-1}((K_{A_{n}})_{I}),$$
where
\begin{itemize}
  \item $\iota \colon \alpha \mapsto \alpha + M_I$,
  \item $M_I$ is the submodule defined immediately preceding Theorem~\ref{vs_iso},
  \item $\bar{\Phi}_I$ is induced by $\Phi_I \colon \Q\left\langle \Perm_I  \right\rangle \to \widetilde{H}_{k-1}((K_{A_n})_I)$, and
  \item $\ast$ is the canonical isomorphism as the dual space.
\end{itemize}
In conclusion, every element $x \in \Perm_I$ can be expressed as a linear combination of elements from $\Alt_I$ in $\Q\left\langle \Perm_I  \right\rangle/ M_I$.
Then, we denote by $\cC^{\alpha}_x$ the coefficient of $\alpha \in \Alt_I$ in the expression of~$x$, as
$$
x = \sum_{\alpha \in \Alt_I}\cC^{\alpha}_x\cdot\alpha.
$$

\begin{example}
Let $I = \{1,3,4,5\} \in \binom{[8]}{4}$.
Refer Example~\ref{A7} for considering $\widehat{(K_{A_7})}_I$.
Then, $(14/35) + (41/35)= 0$ in $\Q\left\langle \Perm_I  \right\rangle/ M_I$, which confirms that
$$
\cC^{(14/35)}_{(41/35)} = -1.
$$
Moreover, one can observe that
$$
(13/45) = (14/35)-(15/34) - (34/15) + (35/14) - (45/13).
$$
Then, we have $\cC^{(14/35)}_{(13/45)} = \cC^{(35/14)}_{(13/45)} = 1$, and $\cC^{(15/34)}_{(13/45)} =\cC^{(34/15)}_{(13/45)} =\cC^{(45/13)}_{(13/45)} = -1$. 
\end{example}

Furthermore, we extend
$$
    \bigoplus_{I \in \binom{[n+1]}{2k}}\Q\left\langle \Alt_I \right\rangle  \longrightarrow \bigoplus_{I \in \binom{[n+1]}{2k}} \frac{\Q\left\langle \Perm_I  \right\rangle}{M_I} \longrightarrow H_{k}(X^{\R}_{A_n})  \longrightarrow H^{k}(X^{\R}_{A_n}),
$$
and, we have a $\Q$-vector space isomorphism
\begin{equation}\label{isomorphism}
  \boldsymbol{\Phi} \colon
    \bigoplus_{k=0}^{\lfloor \frac{n+1}{2} \rfloor} \bigoplus_{I \in \binom{[n+1]}{2k}} \frac{\Q\left\langle \Perm_I  \right\rangle}{M_I} \to H^\ast(X^{\R}_{A_n}).
\end{equation}
It is clear that the natural permutation action of $W_{A_n}$ on $[n+1]$ induces
a $W_{A_n}$-module structure on $\bigoplus_{I\in \binom{[n+1]}{2r}} \frac{\Q\left\langle \Perm_I  \right\rangle}{M_I}$ since $\bigoplus_{I\in \binom{[n+1]}{2r}} M_I$ is stable under the action.
As a corollary, this gives an explicit (co)homology representation on $W_{A_n}$-action for $X^{\R}_{A_n}$ that is isomorphic to Specht modules as provided in \cite{Cho-Choi-Kaji2019}.

\section{Multiplicative structure of $H^\ast(X^\R_{A_n})$}\label{section : multi}

In this section, our investigation centers on the multiplicative structure of $H^\ast(X^\R_{A_n})$, which we relate to alternating permutations on even subsets within the set $[n+1]$.
For the purpose of this analysis, we will make the assumption that $I_1 \in \binom{[n+1]}{2k}$ and $I_2 \in \binom{[n+1]}{2l}$.

Now, we assume $I_1 \cap I_2 = \emptyset$, and in this case, we denote by $I_1+I_2$ the symmetric difference $I_1 \triangle I_2$.
For an alternating permutation $z = (z_1z_2/\cdots/z_{2(r+l)-1}z_{2(r+l)})$ in $\Alt_{I_1 + I_2}$, $z$ is said to be \emph{restrictable} to $I_1$ and $I_2$, if $\{z_{2i-1},z_{2i}\}$ is contained in either $I_1$ or $I_2$ for all $1 \leq i \leq r+l$.
We denote by $\cR_{I_1,I_2}$ the set of restrictable permutations to $I_1$ and $I_2$.

For $J \subset I \subset [n+1]$, we define a map $\rho_J \colon \Perm_I \to \Perm_J$ by $\rho_J(x)$ is a subpermutation of $x$.
Let $z \in \cR_{I_1,I_2}$, and set $x = \rho_{I_1} (z) = (x_1x_2/\cdots/x_{2k-1}x_{2k})$ and $y=\rho_{I_2}(z) = (y_1y_2/ \cdots / y_{2l-1}y_{2l})$.
Since both $x$ and $y$ are subpermutations of $z$, there is a rearrangement $\sigma \in \fS_{k+l}$ of indexes such that $(z_{2\sigma(1)-1}z_{2\sigma(1)}/\cdots/z_{2\sigma(k+l)-1}z_{2\sigma(k+l)})$ is
$(x_1x_2/\cdots/x_{2k-1}x_{2k}/y_1y_2/\cdots/y_{2l-1}y_{2l})$.
We call $\sigma$ the \emph{rearrangement} of $z$ with respect to $I_1$ and $I_2$.
Note that
$\sigma(1) < \sigma(2) < \cdots < \sigma(k)$ and $\sigma(l+1) < \sigma(l+2) < \cdots < \sigma(l+k)$.
The sign of $\sigma$ is denoted by $\kappa_{I_1,I_2}(z)$.

\begin{example}
  For $I_1 = \{1,3,4,6\}$ and $I_2 = \{2,5\}$, $z = (14/25/36)$ is a restrictable permutation to $I_1$ and $I_2$, \ie $z \in \cR_{I_1,I_2}$.
  Moreover, $\rho_{I_1}((14/25/36)) = (14/36)$, $\rho_{I_2}((14/25/36)) = (25)$, and $\kappa_{I_1,I_2}(z) = -1.$
  However, for example, $(15/34/26)$ is not restrictable to $I_1$ and $I_2$.
\end{example}

\begin{definition}\label{cup_perm}
Let $\smile \colon \frac{\Q\left\langle \Perm_{I_1}  \right\rangle}{M_{I_1}} \otimes \frac{\Q\left\langle \Perm_{I_2}  \right\rangle}{M_{I_2}} \to \frac{\Q\left\langle \Perm_{I_1 \triangle I_2}  \right\rangle}{M_{I_1 \triangle I_2}}$ defined by, for $\alpha \in \Alt_{I_1}$ and $\beta \in \Alt_{I_2}$,
$$
\alpha \smile \beta = \begin{cases}
\underset{z \in \cR_{I_1,I_2}}{\sum}(-1)^{\kappa_{I_1,I_2}(z)} \cC^\alpha_{\rho_{I_1}(z)} \cC^\beta_{\rho_{I_2}(z)} \cdot z, & \mbox{if } I_1 \cap I_2 = \emptyset,\\
0, & \mbox{otherwise.}
\end{cases}
$$
\end{definition}

Then, from the multiplicative structure $\smile$,
$$
 \bigoplus_{k=0}^{\lfloor \frac{n+1}{2} \rfloor} \bigoplus_{I \in \binom{[n+1]}{2k}} \frac{\Q\left\langle \Perm_I  \right\rangle}{M_I}
$$
can be regarded as a $\Q$-algebra.

\begin{example}
Let $I_1 = \{1,3,4,6\}$ and $I_2 = \{2,5\}$.
Then, all elements $z$ of $\cR_{I_1,I_2}$ are grouped into elements with the same $\rho_{I_1}(z)$ and $\rho_{I_2}(z)$ as follows:
\begin{enumerate}
  \item $z= (14/36/25),(14/25/36),(25/14/36),$ when $\rho_{I_1}(z) = (14/36)$ and $\rho_{I_2}(z) = (25)$
  \item $z= (16/34/25),(16/25/34),(25/16/34),$ when $\rho_{I_1}(z) = (16/34)$ and $\rho_{I_2}(z) = (25)$
  \item $z= (34/16/25),(34/25/16),(25/34/16),$
  when $\rho_{I_1}(z) = (16/34)$ and $\rho_{I_2}(z) = (25)$
  \item $z= (36/14/25),(36/25/14),(25/36/14),$
  when $\rho_{I_1}(z) = (36/14)$ and $\rho_{I_2}(z) = (25)$
  \item $z= (46/13/25),(46/25/13),(25/46/13),$
  when $\rho_{I_1}(z) = (46/13)$ and $\rho_{I_2}(z) = (25)$
  \item $z = (13/25/46),$ when $\rho_{I_1}(z) = (13/46)$ and $\rho_{I_2}(z) = (25)$
\end{enumerate}
Consider $\alpha = (14/36) \in \Alt_{I_1}$ and $\beta = (25) \in \Alt_{I_2}$.
For the first five cases,
$$
\cC^\alpha_{\rho_{I_1}(z)} \cdot \cC^\beta_{\rho_{I_2}(z)} = \delta_{\alpha,\rho_{I_1}(z)}\cdot\delta_{\beta,\rho_{I_2}(z)}=
\begin{cases}
1, & \mbox{in the first case,} \\
0, & \mbox{otherwise.}
\end{cases}$$
In the last case, since $(13/46) = (14/36)-(16/34)-(34/16)+(36/14)-(46/13)$,
$$
\cC^\alpha_{\rho_{I_1}(z)} \cdot \cC^\beta_{\rho_{I_2}(z)} = \cC^{(14/36)}_{(13/46)} \cdot \delta_{(25),(25)} = 1.
$$
Finally, we conclude that $$(14/36) \smile (25) = (14/36/25) - (14/25/36) + (25/14/36) - (13/25/46).$$
\end{example}

Now, let us consider the $\Q$-linear map $\boldsymbol{\Phi}$ in (\ref{isomorphism}).

\begin{theorem}\label{main_A}
The $\Q$-linear map
$$
\boldsymbol{\Phi} \colon \bigoplus_{k=0}^{\lfloor \frac{n+1}{2} \rfloor} \bigoplus_{I \in \binom{[n+1]}{2k}} \frac{\Q\left\langle \Perm_I  \right\rangle}{M_I} \to \underset{k \in \N}\bigoplus H^k(X_{A_n}^{\R},\Q),
$$
is a $\Q$-algebra isomorphism.
\end{theorem}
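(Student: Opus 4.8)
The plan is to verify that the $\Q$-linear isomorphism $\boldsymbol{\Phi}$ of \eqref{isomorphism} is multiplicative, i.e.\ that $\boldsymbol{\Phi}(u \smile v) = \boldsymbol{\Phi}(u) \smile \boldsymbol{\Phi}(v)$ where the right-hand cup product is the one supplied by Theorem~\ref{Choi-Park2017 multimplicative}. Since both sides are $\Q$-bilinear and the classes $\{\Phi_I(\alpha)\}_{\alpha \in \mathrm{Alt}_I}$ form a basis of each summand (Lemma~\ref{lemma:basis_A}), it suffices to check the identity on pairs $\alpha \in \mathrm{Alt}_{I_1}$, $\beta \in \mathrm{Alt}_{I_2}$. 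When $I_1 \cap I_2 \neq \emptyset$ one argues that the simplicial join construction forces the product to vanish: the row-space element $S_1 + S_2$ corresponds to $I_1 \triangle I_2$, and the natural simplicial map $(K_{A_n})_{I_1 \triangle I_2} \to (K_{A_n})_{I_1} \star (K_{A_n})_{I_2}$ together with the homotopy collapse of Lemma~\ref{typeA_homotopy} (since $I_1 \cap I_2$ lies outside $I_1 \triangle I_2$) annihilates the relevant cohomology classes; this matches the ``$0$ otherwise'' branch of Definition~\ref{cup_perm}.

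The substantive case is $I_1 \cap I_2 = \emptyset$. Here I would trace through Theorem~\ref{Choi-Park2017 multimplicative} explicitly. The product $\Phi_{I_1}(\alpha) \smile \Phi_{I_2}(\beta)$ is computed by pulling back the join cocycle $[\Delta_\alpha] \star [\Delta_\beta]$ (a cross-polytope boundary of dimension $k+l$) along the simplicial map $(K_{A_n})_{I_1 \triangle I_2} \to (K_{A_n})_{I_1} \star (K_{A_n})_{I_2}$, then restricting via Lemma~\ref{typeA_homotopy} to $\widehat{(K_{A_n})_{I_1\triangle I_2}}$. The key geometric observation is that a simplex $\varphi_{I_1 \triangle I_2}(z)$ of $\widehat{(K_{A_n})_{I_1 \triangle I_2}}$ maps into $\Delta_\alpha \star \Delta_\beta$ precisely when $z$ is \emph{restrictable} to $I_1, I_2$ (so that the blocks of $z$ split cleanly between the two factors) and moreover $\varphi_{I_1}(\alpha)$ is a face of $\Delta_{\rho_{I_1}(z)}$ and $\varphi_{I_2}(\beta)$ is a face of $\Delta_{\rho_{I_2}(z)}$. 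Writing $\alpha = \sum \cC^\alpha_x x$, $\beta = \sum \cC^\beta_y y$ as combinations of permutations, expanding the cup product over the product basis, and collecting the contribution of each $z \in \cR_{I_1,I_2}$, the coefficient that emerges is exactly $\cC^\alpha_{\rho_{I_1}(z)} \cC^\beta_{\rho_{I_2}(z)}$ up to a sign; that sign is the sign of the permutation interleaving the blocks of $\rho_{I_1}(z)$ and $\rho_{I_2}(z)$ back into the order prescribed by $z$, which is precisely $\kappa_{I_1,I_2}(z)$. This is where the orientation bookkeeping of Remark~\ref{rem:relations} must be invoked to show that the signs produced by the chain-level join agree with $(-1)^{\kappa_{I_1,I_2}(z)}$, using that the Garnir-type relations defining $M_I$ are compatible with the shuffle of blocks.

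The main obstacle I anticipate is the sign calculation in this last step: the identification of the chain-level cross-product sign (coming from the standard Alexander--Whitney / join formula on simplicial cochains, together with the orientations fixed on each $\Delta_x$) with the combinatorial quantity $(-1)^{\kappa_{I_1,I_2}(z)}$. One has to be careful that reordering the blocks of $z$ so that the $I_1$-blocks precede the $I_2$-blocks introduces exactly the sign $\kappa_{I_1,I_2}(z) = \mathrm{sgn}(\sigma)$, and that this is consistent across all $z$ contributing to a fixed basis element of the target. A clean way to handle this is to first prove the formula for \emph{reduced} inputs — pairs where $\alpha$ and $\beta$ are themselves already in the convenient normal form — so that $\cC^\alpha_{\rho_{I_1}(z)}, \cC^\beta_{\rho_{I_2}(z)} \in \{0, \pm 1\}$, reducing the sign question to a single interleaving permutation; then extend by bilinearity using that $M_{I_1} \otimes \Q\langle \mathrm{Perm}_{I_2}\rangle + \Q\langle \mathrm{Perm}_{I_1}\rangle \otimes M_{I_2}$ maps into $M_{I_1 \triangle I_2}$ under $\smile$, which itself follows from Remark~\ref{rem:relations} applied blockwise. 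Once the sign is pinned down, associativity and unitality of $\smile$ are formal (the unit is the generator of $H^0$ indexed by $I = \emptyset$), and the theorem follows from Theorem~\ref{Choi-Park2017 multimplicative} combined with the fact that $\boldsymbol{\Phi}$ is already known to be a $\Q$-vector space isomorphism.
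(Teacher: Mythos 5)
Your proposal follows essentially the same route as the paper: reduce to basis pairs $\alpha\in\mathrm{Alt}_{I_1}$, $\beta\in\mathrm{Alt}_{I_2}$, kill the product when $I_1\cap I_2\neq\emptyset$, and in the disjoint case compute the image of $[\Delta_z]$ under $(K_{A_n})_{I_1+I_2}\hookrightarrow (K_{A_n})_{I_1}\star(K_{A_n})_{I_2}$, which is exactly the paper's Lemma~\ref{lemma2} (restrictable $z$ contribute with the sign $\kappa_{I_1,I_2}(z)$, non-restrictable $z$ become boundaries), so that both sides evaluate to $(-1)^{\kappa_{I_1,I_2}(z)}\cC^{\alpha}_{\rho_{I_1}(z)}\cC^{\beta}_{\rho_{I_2}(z)}$ on $\Phi_{I_1+I_2}(z)$. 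Two small corrections: the vanishing for $I_1\cap I_2\neq\emptyset$ is a pure dimension count ($\widetilde{H}^{k+l-1}((K_{A_n})_{I_1\triangle I_2})=0$ because that complex is a wedge of spheres of dimension at most $k+l-2$), not a consequence of the collapse in Lemma~\ref{typeA_homotopy}; and the expansion runs $x=\sum_{\alpha}\cC^{\alpha}_{x}\,\alpha$, not $\alpha=\sum\cC^{\alpha}_{x}\,x$.
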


First of all, we introduce a key lemma for the proof of Theorem~\ref{main_A}.
We assume that $I_1 \cap I_2 = \emptyset$.

\begin{lemma}\label{lemma2}
Let $\iota_{\ast} \colon \widetilde{H}_{k+l-1}((K_{A_{n}})_{I_1+I_2}) \to \widetilde{H}_{k+l-1}((K_{A_{n}})_{I_1} \star (K_{A_{n}})_{I_2})$ be the homomorphism induced by the simplicial maps $\iota \colon (K_{A_{n}})_{I_1+I_2} \hookrightarrow (K_{A_{n}})_{I_1} \star (K_{A_{n}})_{I_2}$.

$$
\iota_{\ast}(\Phi_{I_1+I_2}(z)) =
\begin{cases}
(-1)^{\kappa_{I_1,I_2}(z)}\Phi_{I_1}(\rho_{I_1}(z))\star\Phi_{I_2}(\rho_{I_2}(z)), & \mbox{for }z \in \cR_{I_1,I_2} \\
0, & \mbox{for }z \notin \cR_{I_1,I_2}
\end{cases}
$$
\end{lemma}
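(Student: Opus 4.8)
The strategy is to work at the level of simplicial chains, tracking what the inclusion $\iota$ does to the cross-polytope cycles $\Delta_z$. Recall that for $z = (z_1z_2/\cdots/z_{2(k+l)-1}z_{2(k+l)}) \in \text{Alt}_{I_1+I_2}$, the chain $\Delta_z$ is the ordered join $\{\set^z_1,\set^z_2\}\star\cdots\star\{\set^z_{2(k+l)-1},\set^z_{2(k+l)}\}$, and $\Phi_{I_1+I_2}(z) = [\Delta_z]$. The vertices of $\Delta_z$ are the sets $\set^z_j$, each a union of the first several blocks of $z$ together with one element of the current block. First I would analyze when such a vertex $\set^z_j$ lies in $(K_{A_n})_{I_1}\star(K_{A_n})_{I_2}$: after applying the homotopy equivalence $\pi$ of Lemma~\ref{typeA_homotopy} to $\widehat{(K_{A_n})_{I_1}}\star\widehat{(K_{A_n})_{I_2}}$, the relevant vertices are exactly the nonempty proper subsets of $I_1$ or of $I_2$. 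A set $\set^z_j$ reduces (under $\pi$ on each factor, i.e.\ intersecting with $I_1$ and with $I_2$) to a pair of sets, and $\Delta_z$ maps into the join precisely when each block $\{z_{2i-1},z_{2i}\}$ is monochromatic — that is, when $z\in\cR_{I_1,I_2}$. This gives the vanishing in the second case: if some block straddles $I_1$ and $I_2$, then $\iota$ composed with the homotopy collapse sends that $\star$-factor of $\Delta_z$ to a degenerate (lower-dimensional) chain, so $\iota_\ast[\Delta_z]=0$. I would phrase this carefully using Lemma~\ref{typeA_homotopy} applied to each join factor, or alternatively observe that the image chain has a repeated vertex in one factor.

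For $z\in\cR_{I_1,I_2}$, the core is a bookkeeping identity about reordering the join factors. Let $\sigma\in\fS_{k+l}$ be the rearrangement of $z$ with respect to $I_1,I_2$, so that permuting the $k+l$ blocks of $z$ by $\sigma$ produces the concatenation of $\rho_{I_1}(z)$ followed by $\rho_{I_2}(z)$. Each block $\{z_{2i-1},z_{2i}\}$ contributes a $2$-element $\star$-factor $\{\set^z_{2i-1},\set^z_{2i}\}$; collapsing onto $\widehat{(K_{A_n})_{I_1}}$ or $\widehat{(K_{A_n})_{I_2}}$ sends $\set^z_{2i-1}\mapsto \set^z_{2i-1}\cap I_\epsilon$ and $\set^z_{2i}\mapsto \set^z_{2i}\cap I_\epsilon$ (where $\epsilon\in\{1,2\}$ is the color of that block), and — this is the point to check — that intersection is exactly $\set^x_{2j-1}$ (resp.\ $\set^x_{2j}$) for $x=\rho_{I_\epsilon}(z)$ and the appropriate index $j$, because the earlier blocks of the same color are precisely the earlier blocks of $x$. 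Reordering the join factors of the chain $\Delta_z$ to group the $I_1$-colored factors first and the $I_2$-colored factors second introduces the sign of the permutation that sorts the factors, which is exactly $(-1)^{\kappa_{I_1,I_2}(z)}$ (using that $\sigma$ is increasing on each of $\{1,\dots,k\}$ and $\{k+1,\dots,k+l\}$, so no extra sign comes from within each color class). After the reordering the chain is literally $\Delta_{\rho_{I_1}(z)}\star\Delta_{\rho_{I_2}(z)}$, giving $\iota_\ast[\Delta_z] = (-1)^{\kappa_{I_1,I_2}(z)}\,\Phi_{I_1}(\rho_{I_1}(z))\star\Phi_{I_2}(\rho_{I_2}(z))$ as claimed.

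\textbf{Main obstacle.} I expect the delicate point to be the sign computation and, relatedly, making the ``collapse = intersect with $I_\epsilon$'' step precise for the \emph{chains} (not just the complexes). One must verify that the simplicial map realizing the join inclusion followed by the homotopy collapse on each factor genuinely carries the ordered simplex $[\set^z_1,\dots,\set^z_{2(k+l)}]$ (with the cross-polytope orientation built into $\Delta_z$) to the correspondingly ordered simplex for the $\rho_{I_\epsilon}(z)$, after block-reordering, with no sign surprises — in particular that within a fixed color the blocks of $z$ appear in the \emph{same relative order} as the blocks of $\rho_{I_\epsilon}(z)$ (true by definition of subpermutation and of the one-line notation ordering $z^{-1}(z_1)<z^{-1}(z_2)<\cdots$). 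Once that order-compatibility is nailed down, the sign is forced to be $\mathrm{sgn}(\sigma)=(-1)^{\kappa_{I_1,I_2}(z)}$ by the standard rule for reordering factors of a simplicial join. Everything else is routine: the vanishing case is a degeneracy argument, and the non-vanishing case is the orientation-tracking just described, so I would keep those parts brief and spend the bulk of the write-up on this compatibility-and-sign verification.
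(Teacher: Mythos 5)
Your treatment of the restrictable case is essentially the paper's argument: reorder the $k+l$ join factors of $\Delta_z$ by the rearrangement $\sigma$ (each factor is a $0$-sphere, so the reordering sign is exactly $\mathrm{sgn}(\sigma)=(-1)^{\kappa_{I_1,I_2}(z)}$, with no extra contribution since $\sigma$ is increasing on each colour class), verify the identities $\set^z_{2\sigma(i)-1}\cap I_1=\set^{x}_{2i-1}$, $\set^z_{2\sigma(i)}\cap I_1=\set^{x}_{2i}$ for $x=\rho_{I_1}(z)$ (and likewise for $I_2$), and invoke Lemma~\ref{typeA_homotopy} on each factor to identify the resulting classes with $\Phi_{I_1}(\rho_{I_1}(z))$ and $\Phi_{I_2}(\rho_{I_2}(z))$. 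That part is correct and matches the paper.

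The vanishing case, however, has a genuine gap: neither of the mechanisms you propose actually works. First, $\iota$ is an inclusion, so the chain $\Delta_z$ does not become ``degenerate'' or lower-dimensional in $(K_{A_n})_{I_1}\star(K_{A_n})_{I_2}$; it is literally the same chain. Second, the ``repeated vertex in one factor'' alternative fails because the two vertices $\set^z_{2q-1},\set^z_{2q}$ of a straddling block land in \emph{different} join factors: writing $P=\{z_1,\dots,z_{2q-2}\}$, the cardinalities $\vert P\cap I_1\vert$ and $\vert P\cap I_2\vert$ have the same parity, and adding the single element $z_{2q-1}$ (resp.\ $z_{2q}$) flips the parity on exactly one side, so exactly one of the two vertices is a vertex of $(K_{A_n})_{I_1}$ and the other of $(K_{A_n})_{I_2}$. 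In particular they remain distinct after collapsing each factor, and no simplex of $\Delta_z$ degenerates. Also, a vertex of $(K_{A_n})_{I_1+I_2}$ lies in exactly one factor of the join (it does not ``reduce to a pair of sets''), so $\Delta_z$ always maps into the join regardless of whether $z$ is restrictable; the dichotomy is homological, not one of well-definedness. The correct conclusion is the opposite of degeneracy: precisely \emph{because} $\set^z_{2q-1}$ and $\set^z_{2q}$ lie in different join factors, the pair $\{\set^z_{2q-1},\set^z_{2q}\}$, which is only a $0$-sphere in $(K_{A_n})_{I_1+I_2}$, spans a $1$-simplex of $(K_{A_n})_{I_1}\star(K_{A_n})_{I_2}$. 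Joining that $1$-simplex with $\Delta_{\hat z}$ (where $\hat z$ omits the straddling block) produces a $(k+l)$-chain $\mu_z$ with $\partial\mu_z=\pm\Delta_z$, so $\iota_\ast[\Delta_z]=0$. You would need to replace your degeneracy argument with this bounding argument for the lemma to hold.
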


\begin{proof}
We bifurcate our proof into two distinct cases; one where an alternating permutation $z$ is an element of $\cR_{I_1,I_2}$, and the other where it is not.

To begin with, let us consider the case where $z \in \cR_{I_1,I_2}$.
Let $\sigma \in \fS_{k+l}$ be the rearrangement of $z$ with respect to $I_1$ and $I_2$.
Set $x = \rho_{I_1} (z) = (x_1x_2/\cdots/x_{2k-1}x_{2k})$ and $y=\rho_{I_2}(z) = (y_1y_2/ \cdots / y_{2l-1}y_{2l})$.
We have
\begin{align*}
    \Phi_{I_1+I_2}(z) &= [\{\set^z_1,\set^z_2\}\star\cdots\star\{\set^z_{2k+2l-1},\set^z_{2k+2l}\}] \\
    &= (-1)^{\kappa_{I_1,I_2}(z)}[\{\set^z_{2\sigma(1)-1},\set^z_{2\sigma(1)}\}\star\cdots\star\{\set^z_{2\sigma(k+l)-1},\set^z_{2\sigma(k+l)}\}].
\end{align*}
For $1 \leq i \leq k$, we have $z_{2\sigma(i)-1} = x_{2i-1}$, $z_{2\sigma(i)} = x_{2i}$, and for $1 \leq j \leq l$, we have $z_{2\sigma(k+j)-1} = y_{2j-1}$ and $z_{2\sigma(k+j)} = y_{2j}$.
Since $\sigma(1) < \cdots < \sigma(k)$ and $\sigma(l+1) < \cdots < \sigma(l+k)$,
\begin{enumerate}
  \item $\set^z_{2\sigma(i)-1} \cap I_1 = \set^x_{2i-1}$ and $\set^z_{2\sigma(i)} \cap I_1 = \set^x_{2i}$
  \item $\set^z_{2\sigma(k+j)-1} \cap I_2 = \set^y_{2j-1}$ and $\set^z_{2\sigma(k+j)} \cap I_2 = \set^y_{2j}$.
\end{enumerate}
Then, by Lemma~\ref{typeA_homotopy},
\begin{align*}
    \Phi_{I_1}(\rho_{I_1}(z)) &=[\{\set^x_1,\set^x_2\}\star\cdots\star\{\set^x_{2k-1},\set^x_{2k}\}] \\ &=[\{\set^z_{2\sigma(1)-1},\set^z_{2\sigma(1)}\}\star\cdots\star\{\set^z_{2\sigma(k)-1},\set^z_{2\sigma(k)}\}]
\end{align*}
in $\widetilde{H}_{k-1}((K_{A_{n}})_{I_1})$, and
\begin{align*}
    \Phi_{I_2}(\rho_{I_2}(z)) &=[\{\set^y_1,\set^y_2\}\star\cdots\star\{\set^y_{2l-1},\set^y_{2l}\}]\\
    &=[\{\set^z_{2\sigma(k+1)-1},\set^z_{2\sigma(k+1)}\}\star\cdots\star\{\set^z_{2\sigma(k+l)-1},\set^z_{2\sigma(k+l)}\}]
\end{align*}
in $\widetilde{H}_{l-1}((K_{A_{n}})_{I_2})$.
Thus,
$[\Delta_z] = [\Delta_{\rho_{I_1}(z)}\star\Delta_{\rho_{I_2}(z)}]$ in $\widetilde{H}_{k+l-1}((K_{A_{n}})_{I_1}\star(K_{A_{n}})_{I_2})$.

Subsequently, we turn our consideration to the case where $z \notin \cR_{I_1,I_2}$.
We can choose $1 \leq q \leq k+l$ such that $\{z_{2q-1},z_{2q}\} \not\subset I_1$ and $\{z_{2q-1},z_{2q}\} \not\subset I_2$.
Then, considering $\set^z_{2q-1}$ and $\set^z_{2q}$, one of them is a vertex of $(K_{A_{n}})_{I_1}$, and the other one is a vertex of $(K_{A_{n}})_{I_2}$.
Therefore, the simplex $\{\set^z_{2q-1}, \set^z_{2q}\}$ is a $1$-face in $(K_{A_{n}})_{I_1}\star(K_{A_{n}})_{I_2}$.

Define the permutation $\hat{z}$ by removing $(z_{2q-1}z_{2q})$ from $z$.
Then, $\hat{z}$ is still a permutation, and $\Delta_{\hat{z}}$ is a nonzero element of $C_{k+l-2}((K_{A_{n}})_{I_1}\star(K_{A_{n}})_{I_2})$.
Denote by $\mu_z$ the element of $C_{k+l-1}((K_{A_{n}})_{I_1}\star(K_{A_{n}})_{I_2})$ obtained by joining $\Delta_{\hat{z}}$ and the $1$-simplex with two vertices  $\set^z_{2q-1}$ and $\set^z_{2q}$.
Then, $\partial(\mu_z)$ is either $\Delta_z$ or $-\Delta_z$, and, hence, $\Phi_{I_1+I_2}(z) = [\Delta_z] =[\pm \partial(\mu_z)] = 0$.

\end{proof}

From Theorem~\ref{Choi-Park2017 multimplicative}, the product structure on $\underset{I}\bigoplus \widetilde{H}^{\ast}((K_{A_{n}})_I)$ is given by the canonical maps $$\smile \colon \widetilde{H}^{k-1}((K_{A_{n}})_{I_1}) \otimes \widetilde{H}^{l-1}((K_{A_{n}})_{I_2}) \to \widetilde{H}^{k+l-1}((K_{A_n})_{I_1+I_2})$$ which are induced by simplicial maps $\iota \colon (K_{A_{n}})_{I_1+I_2} \hookrightarrow (K_{A_{n}})_{I_1} \star (K_{A_{n}})_{I_2}$.
In other words, the product $g \smile h$ of $g \in \widetilde{H}^{k-1}((K_{A_n})_{I_1})$ and $h \in \widetilde{H}^{l-1}((K_{A_n})_{I_2})$ is $(g \otimes h) \circ \iota_{\ast}$, as the following commutative diagram ;

\begin{eqnarray}\label{dia}
  \begin{tikzcd}[row sep = huge]
    \widetilde{H}_{k+l-1}((K_{A_n})_{I_1+ I_2}) \arrow{r}{\iota_{\ast}} \arrow[swap]{dr}{g \smile h} & \widetilde{H}_{k+l-1}((K_{A_n})_{I_1} \star (K_{A_n})_{I_2}) \arrow{d}{g \otimes h} \\
     & \Q
  \end{tikzcd},
\end{eqnarray}

where $\iota_{\ast}$ is the induced homomorphism of $\iota$.

\begin{proof}[Proof of Theorem~\ref{main_A}.]
It is sufficient to prove that $\boldsymbol{\Phi}$ preserves multiplicative structures by demonstrating that
$$
(\boldsymbol{\Phi}(\alpha) \smile \boldsymbol{\Phi}(\beta))(\Phi_{I_1 \triangle I_2}(z)) = \boldsymbol{\Phi}(\alpha \smile \beta)(\Phi_{I_1 \triangle I_2}(z))
$$
for each $\alpha \in \Alt_{I_1},$
$\beta \in \Alt_{I_2}$, and $z \in \Perm_{I_1 \triangle I_2}.$

If $I_1 \cap I_2 \neq \emptyset$, then $\vert I_1 \triangle I_2 \vert = \vert I_1 \cup I_2 \vert - \vert I_1 \cap I_2 \vert < 2k+2l$.
Then, since $(K_{A_{n}})_{I_1 \triangle I_2}$ is homotopy equivalent to a wedge sum of sphere which dimension is at most $k+l-2$, $\widetilde{H}^{k+l-1}((K_{A_{n}})_{I_1 \triangle I_2})$ is trivial.
Thus, $\widetilde{H}^{k-1}((K_{A_{n}})_{I_1}) \otimes \widetilde{H}^{l-1}((K_{A_{n}})_{I_2}) \to \widetilde{H}^{k+l-1}((K_{A_{n}})_{I_1 \triangle I_2})$ is a zero map.
In other words, the multiplicative structure of $\widetilde{H}^{k-1}((K_{A_{n}})_{I_1})$ and $\widetilde{H}^{l-1}((K_{A_{n}})_{I_2})$ is trivial, and thus $(\boldsymbol{\Phi}(\alpha) \smile \boldsymbol{\Phi}(\beta)) =0$.
Since $\alpha \smile \beta$ is zero, the proof is complete, if $I_1 \cap I_2 \neq \emptyset$.

Hence, we assume that $I_1 \cap I_2 = \emptyset$.
From \eqref{dia}, $$(\boldsymbol{\Phi}(\alpha) \smile \boldsymbol{\Phi}(\beta))(\Phi_{I_1 + I_2}(z)) = (\boldsymbol{\Phi}(\alpha) \otimes \boldsymbol{\Phi}(\beta))(\iota_{\ast}(\Phi_{I_1 + I_2}(z))).$$
By Lemma~\ref{lemma2},
$$
\iota_{\ast}(\Phi_{I_1+I_2}(z)) =
\begin{cases}
(-1)^{\kappa_{I_1,I_2}(z)}\Phi_{I_1}(\rho_{I_1}(z))\star\Phi_{I_2}(\rho_{I_2}(z)), & \mbox{for }z \in \cR_{I_1,I_2}, \\
0, & \mbox{for }z \notin \cR_{I_1,I_2}.
\end{cases}
$$
Thus, if $z \in \cR_{I_1,I_2}$,
\begin{align*}
  (\boldsymbol{\Phi}(\alpha) \smile \boldsymbol{\Phi}(\beta))(\Phi_{I_1 + I_2}(z)) &= (-1)^{\kappa_{I_1,I_2}(z)}\boldsymbol{\Phi}(\alpha)(\Phi_{I_1}(\rho_{I_1}(z))) \boldsymbol{\Phi}(\beta)(\Phi_{I_2}(\rho_{I_2}(z)))\\
  &= (-1)^{\kappa_{I_1,I_2}(z)} \cC^\alpha_{\rho_{I_1}(z)}\cC^\beta_{\rho_{I_2}(z)}.
\end{align*}
If $z \notin \cR_{I_1,I_2}$, $(\boldsymbol{\Phi}(\alpha) \smile \boldsymbol{\Phi}(\beta))(\Phi_{I_1 + I_2}(z)) = 0.$

Now, by Definition~\ref{cup_perm},
\begin{align*}
    \boldsymbol{\Phi}(\alpha \smile \beta)(\Phi_{I_1 + I_2}(z)) &= \boldsymbol{\Phi}(\sum_{\bar{z} \in \cR_{I_1,I_2}}(-1)^{\kappa_{I_1,I_2}(\bar{z})}\cC^\alpha_{\rho_{I_1}(\bar{z})}\cC^\beta_{\rho_{I_2}(\bar{z})} \cdot \bar{z})(\Phi_{I_1 + I_2}(z)) \\
    &= \begin{cases}
(-1)^{\kappa_{I_1,I_2}(z)}\cC^\alpha_{\rho_{I_1}(z)}\cC^\beta_{\rho_{I_2}(z)}, & \mbox{for }z \in \cR_{I_1,I_2}, \\
0, & \mbox{for }z \notin \cR_{I_1,I_2},
\end{cases}
\end{align*}
as desired.
\end{proof}

\section{Further discussions on coefficients}\label{section : Coefficients}

In this section, for any given $\alpha \in \Alt_I$ and $x \in \Perm_I$, we shall discuss the computation of the coefficient $\cC^{\alpha}_x$ using \emph{incidence algebras}.
For a finite poset $(\cP,\leq)$, consider the set of functions from intervals of $\cP$ to integers, denoted by $\cI(\cP,\Z)$.
Define addition and scalar multiplication on $\cI(\cP,\Z)$ pointwise.
The multiplication $\ast$ on $\cI(\cP,\Z)$ is defined by
$$
f \ast g ([a,b]) = \sum_{a \leq x \leq b}f([a,x])g([x,b])
$$
for each pair $f$ and $g$ of $\cI(\cP,\Z)$.
It is well-known that $\cI(\cP,\Z)$ is an associative $\Z$-algebra under~$(+,\ast,\cdot_{\Z})$, and is called the incidence algebra on $(\cP,\leq)$ over $\Z$.

For each $I \in {[n+1] \choose 2k}$, we define a relation $\lessdot$ such that $x \lessdot \alpha$ if and only if
\begin{enumerate}
  \item $\varphi_I(\alpha) = \{\set_1^\alpha,\set_3^\alpha,\ldots,\set_{2k-1}^\alpha\}$ is a face of $\Delta_x$, and
  \item $\alpha \in \Alt_I$
\end{enumerate}
for each pair of permutations $x \in \Perm_I$ and $\alpha \in \Alt_I$.
We now define a partial order $\prec$ on $\Perm_I$ by the covering relation $\lessdot$, \ie $x \prec \alpha$ if and only if there exist $\alpha_1,\ldots,\alpha_{k-1} \in \Perm_I$ such that
$$
x\lessdot \alpha_1 \lessdot \cdots \lessdot \alpha_{k} = \alpha.
$$

By Lemma~\ref{face_lemma}, if $x \lessdot \alpha$, then there uniquely exist $\tau_{\alpha,x} \in \fS_{2k}^{\even}$ and $\sigma_{\alpha,x} \in \fS_{2k}^{\odd}$ such that $$
\tau_{\alpha,x}\cdot \sigma_{\alpha,x} \cdot \alpha = x.
$$
Let $f_I \in \cI(\Perm_I,\prec)$ be defined inductively by the following relation:
$$
f_I(x,\alpha) = \begin{cases}
        -1, & \mbox{if } \alpha = x \notin \Alt_I,\\
        \delta_{\alpha,x}, & \mbox{if } x \in \Alt_I, \\
        -\underset{x \prec \beta \lessdot \alpha}{\sum}\sgn(\tau_{\alpha,\beta})f_{I}(x,\beta), & \mbox{otherwise}.
      \end{cases}
$$

\begin{example}
For a set $I = \{1,2,3,4\}$, the induced subposet of $(\Perm_I,\prec)$ spanned by the set~$\{\alpha \in \Perm_I \mid x \prec \alpha\}$ is
\begin{center}
\begin{tikzpicture}[->,>=stealth',shorten >=1pt,auto,node distance=1.8cm,
                    thick,main node/.style={rectangle,draw,font=\sffamily\small}]

  \node [main node] (1) {34/12};
  \node [main node](2) [below of=1] {13/24};
  \node [main node](3) [right of=2] {14/23};
  \node [main node](4) [right of=3] {23/14};
  \node [main node](5) [left of=2] {24/13};
  \node [main node](6) [below of=2] {12/34};

  \path[every node/.style={font=\sffamily\small}]
    (6) edge node [right] {} (2)
    (6) edge node [right] {} (3)
    (6) edge node [right] {} (4)
    (6) edge node [right] {} (5)
    (2) edge node [right] {} (1);
\end{tikzpicture}.
\end{center}
Let $x$ denote $(12/34)$, and then, $f_I([x,x]) = -1$. For each $\alpha \in \Alt_I$, the value $f_I([x,\alpha])$ is
$$ -\underset{x \prec \beta \lessdot \alpha}{\sum}\sgn(\tau_{\alpha,\beta})f_I([x,\beta]) = -\sgn(\tau_{\alpha,x})f_I([x,x]) =\sgn(\tau_{\alpha,x}) =\begin{cases}
1, & \mbox{if } \alpha = (13/24),\\
-1, & \mbox{if } \alpha = (14/23),\\
-1, & \mbox{if } \alpha = (23/14),\\
1, & \mbox{if } \alpha = (24/13),
\end{cases}$$
and, if $\alpha = (34/12)$,
$$
f_I([x,\alpha]) = -\sgn(\tau_{\alpha,(13/24)})f_I([x,(13/24)]) = -1.
$$
In this case, one can observe that $\cC^\alpha_{x} = f_I([x,\alpha])$ for all $\alpha \in \Alt_I.$
\end{example}

The following theorem demonstrates that this phenomenon is not merely a coincidence in specific cases, but rather a fact that holds in general.

\begin{theorem}
Let $x \in \Perm_I$.
For each $\alpha \in \Alt_I$,
$$
\cC^{\alpha}_x = \begin{cases}
  f_I([x,\alpha]), & \mbox{if } x \prec \alpha, \\
  0, & \mbox{otherwise}.
\end{cases} 
$$
\end{theorem}

\begin{proof}
    Let a permutation $x \in \Perm_I$ be given. 
    If $x$ is alternating, then $$
    \cC^\alpha_x = \delta_{x,\alpha} = f_I([x,\alpha])
    $$
    for each $\alpha \in \Alt_I$.
    Now, we assume that $x \in \Perm_I$ is not an alternating permutation on $I$.
    To establish the proof, let us consider two cases; when the alternating permutations $\alpha \in \Alt_I$ satisfies $x \prec \alpha$, or not.
    
\medskip
  \noindent 
    \underline{\textbf{CASE 1. $x \prec \alpha$ :}}
    Let $\alpha$ be a minimal element of $(\Alt_I, \prec \vert_{\Alt_I})$, \ie $x \lessdot \alpha$.
    The coefficient of $\varphi_I(\alpha)$ in the expression~$\Phi_I(x)$ is $\sgn(\tau_{\alpha,x})$.
    Then,
    $$
    \cC^\alpha_x = \sgn(\tau_{\alpha,x}) = -\sgn(\tau_{\alpha,x})f_I([x,x]) = f_I([x,\alpha]).
    $$
    We assume that, for each $\beta \in \Alt_I$, $\cC^\beta_x = f_I([x,\beta])$ if $\beta \prec \alpha$.
    Thus,
    \begin{align*}
    \cC^{\alpha}_x &=\sum_{x \prec \beta \lessdot \alpha}{-\sgn(\tau_{\alpha,\beta})\cC^{\beta}_x}\\
    &= -\sum_{x \prec \beta \lessdot \alpha}{\sgn(\tau_{\alpha,\beta})f_I([\beta,x])}\\
    &= \cC^{\alpha}_x.
\end{align*}
Therefore, we conclude that this theorem holds in this case, inductively.
    
    \medskip
    \noindent
    \underline{\textbf{CASE 2. $x \not\prec \alpha$ :}}
    It can be observed that $\varphi_I(\alpha)$ is neither a face of $\Delta_x$ nor $\Delta_\beta$ for all $\beta \prec \alpha$, where $\beta \in \Alt_I$.
    Then, we immediately obtain that $\cC^\alpha_x = 0$ as desired.
\end{proof}

\begin{example}\label{graph_ex}
For $I = \{1,2,3,4,5,6\}$, the induced subposet of $(\Perm_I,\prec)$ spanned by the interval $[(12/34/56),(56/34/12)]$ is
\begin{center}
\begin{tikzpicture}[->,>=stealth',shorten >=1pt,auto,node distance=2cm,
                    thick,main node/.style={rectangle,draw,font=\sffamily\footnotesize}]
  \node [main node] (0) {35/16/24};
  \node [main node] (3) [below left of =0]{13/25/46};
  \node [main node, rounded corners=8pt] (4) [below right of=3] {12/34/56};
  \node [main node] (5) [left of=0] {35/14/26};
  \node [main node] (6) [right of=0] {15/34/26};
  \node [main node] (11)[above right of=5] {35/46/12};
  \node [main node] (12)[above right of=0] {56/13/24};
  \node [main node] (15)[above right of=11]{56/34/12};

  \path[every node/.style={font=\sffamily\small}]
    (3) edge node [right] {} (0)
    (4) edge node [right] {} (3)
    (3) edge[left=32] node [left] {} (5)
    (3) edge node [below right] {} (6)
    (5) edge node [right] {} (11)
    (6) edge node [right] {} (12)
    (0) edge node [below right, yshift=-3mm] {} (15)
    (11) edge node [left] {} (15)
    (12) edge node [right] {} (15);
\end{tikzpicture}.
\end{center}
Let $x$ denote $(12/34/56)$. For some $\alpha \in \Alt_I$, we obtain that the coefficients $\cC^\alpha_x$ as follows:
\begin{enumerate}
  \item
  For $\alpha = (13/25/46)$,
  $$
  x = \tau_{\alpha,x}\cdot\sigma_{\alpha,x}\cdot \alpha = \tau_{\alpha,x}\cdot (12/34/56) = (12/34/56),
  $$
  and then,
  $\cC^{\alpha}_{x} = f_I([x,\alpha]) = -\sgn(\tau_{\alpha,x})f_I([x,x])= 1$.
  \item For $\alpha = (35/14/26)$ and $\beta = (13/25/46)$,
  $$
  \beta = \tau_{\alpha,\beta}\cdot\sigma_{\alpha,\beta}\cdot \alpha = \tau_{\alpha,\beta}\cdot (31/52/46) = (13/25/56),
  $$
  and then,
  $\cC^{\alpha}_{x} = f_I([x,\alpha]) = -\sgn(\tau_{\alpha,\beta})f_I([x,\beta])= -1$.
  By similar way,
  $$
  \cC^{(35/16/25)}_x = 1 \text{ and } \cC^{(15/34/26)}_x = -1.
  $$
  \item For $\alpha = (35/46/12)$ and $\beta = (35/14/26)$,
  $$
  \beta = \tau_{\alpha,\beta}\cdot\sigma_{\alpha,\beta}\cdot \alpha = \tau_{\alpha,\beta}\cdot (35/41/62) = (35/14/26),
  $$
  and then,
  $\cC^{\alpha}_{x} = f_I([x,\alpha]) = -\sgn(\tau_{\alpha,\beta})f_I([x,\beta])= 1$.
  By similar way, $\cC^{(56/13/24)}_x = 1.$
  \item Let $\alpha = (56/34/12)$, $\beta_1 = (35/46/12)$, $\beta_2 = (35/16/24)$, and $\beta_3 = (56/13/24)$.
      Then,
      \begin{align*}
        \cC^\alpha_x & = f_I([\alpha,x]) \\
         &= -f_I([x,\beta_1])+f_I([x,\beta_2])-f_I([x,\beta_3])\\
         &= -1.
      \end{align*}
\end{enumerate}

\end{example}

Asking which elements of an incidence algebras on a poset take values in $\{-1,0,1\}$ is an important question.
See \cite{Deodhar1977} and \cite{Bjorner1993} for references.
After conducting calculations for small positive integers~$k$, the authors were able to confirm that every non-zero $f_I([x,\alpha])$ is $\pm 1$, where~$I \in \binom{[n+1]}{2k}$ and~$x \prec \alpha \in \Perm_I$.
However, it is unclear whether $f_I([x,\alpha])$ always takes $\{-1, 0, 1\}$.
This leads us to propose the following conjecture.
\begin{conjecture}
For each interval $[x,\alpha]$ in $(\Perm_I,\prec)$, $f_I([x,\alpha])$ takes values in $\{-1,0,1\}$.
\end{conjecture}

\end{document}